\documentclass[a4paper]{amsart}
\usepackage[foot]{amsaddr}
\usepackage[T1]{fontenc}
\usepackage{fullpage}
\usepackage{amsmath,amsthm,amssymb,amscd}
\usepackage[dvipsnames]{xcolor}
\definecolor{darkgreen}{rgb}{0,0.5,0}
\definecolor{darkblue}{rgb}{0,0,0.5}
\usepackage[colorlinks=true,urlcolor=darkblue,citecolor=darkgreen,linkcolor=darkblue]{hyperref}
\usepackage{graphicx}
\usepackage{lmodern}
\usepackage{tabularray}
\usepackage{adjustbox}
\usepackage{tikz}
\usetikzlibrary{graphs, quotes}

\newcommand{\bbN}{\mathbb{N}}

\newcommand{\calP}{\mathcal{P}}

\usepackage[sc]{mathpazo}


\newtheorem{thm}{Theorem}[section]
\newtheorem{prop}[thm]{Proposition}
\newtheorem{lem}[thm]{Lemma}
\newtheorem{cor}[thm]{Corollary}
\newtheorem{conjecture}[thm]{Conjecture}

\theoremstyle{remark}
\newtheorem{remx}[thm]{Remark}
\newtheorem{examplex}[thm]{Example}

\theoremstyle{definition}
\newtheorem{question}[thm]{Question}

\newcommand{\qedblack}{\hfill \scalebox{0.7}{\ensuremath{\blacksquare}}}
\newenvironment{example}
  {\pushQED{\qedblack}\begin{examplex}}
  {\popQED\end{examplex}}

\newenvironment{rem}
  {\pushQED{\qedblack}\begin{remx}}
  {\popQED\end{remx}}

\title{On the clique covering numbers of Johnson graphs}

\author{S\o ren Fuglede J\o rgensen}
  \address{Kvantify, Rosenvængets Allé 25, DK-2100 Copenhagen, Denmark}
  \email{sfj@kvantify.dk}

\begin{document}

\begin{abstract}
    We initiate a study of the vertex clique covering numbers of Johnson graphs $J(N, k)$, the smallest numbers of cliques necessary to cover the vertices of those graphs.

    We prove identities for the values of these numbers when $k \leq 3$, and $k \geq N - 3$, and using computational methods, we provide explicit values for a range of small graphs. By drawing on connections to coding theory and combinatorial design theory, we prove various bounds on the clique covering numbers for general Johnson graphs, and we show how constant-weight lexicodes can be utilized to create optimal covers of $J(2k, k)$ when $k$ is a small power of two.
\end{abstract}

\maketitle

\tableofcontents

\section{Introduction}

A clique in an undirected graph $G = (V, E)$ is a fully-connected subgraph, and a (vertex) clique cover is a collection of cliques $\{G_i = (V_i, E_i)\}$ for which $\bigcup_i V_i = V$. A maximal clique is a clique which is not a proper subset of any other clique, and a maximum clique is a clique whose cardinality is maximal among all cliques. It is clear that if there is a clique cover of a given cardinality, there is also a clique cover of the same cardinality (or smaller) consisting of only maximal cliques.

The clique covering number $\theta(G)$ of $G$ is the cardinality of a smallest clique cover.

Here, we investigate the clique covering numbers of Johnson graphs, defined as follows:

For $N \in \bbN$, let $[N] = \{1, \dots, N\}$, and for $k \in \bbN$, $0 < k < N$, let
\[
    \calP_k([N]) = \{S \in \calP([N]) \mid \lvert S \rvert = k \}
\]
be the $k$-element subsets of $[N]$. The Johnson graph $J(N, k)$ is the graph whose vertices are these subsets, $V(J(N, k)) = \calP_k([N])$, and whose edges are
\[
    E(J(N, k)) = \{(S_1, S_2) \in V(J(N, k)) \times V(J(N, k)) \mid \lvert S_1 \cap S_2 \rvert = k - 1 \}.
\]

While Johnson graphs are ubiquitous in mathematics, and in particular in coding theory and combinatorial design theory, our motivation for studying their structure is the fact that they form the solution space for a wide range of optimization problems. These include cardinality constrained combinatorial optimization problems -- problems whose objectives are maps $V(J(N, k)) \to \mathbb{R}$ and for which natural optimization algorithms are local search algorithms that may be described as walks on $J(N, k)$ -- but also problems in quantum chemistry, such as electronic structure problems involving molecules with $N$ available spin orbitals out of which $k$ are occupied. Then, the space of solutions is the complex span of the vertices, $\mathcal{H} = \mathrm{span}_{\mathbb{C}}V(J(N, k))$, the problem is to minimize an energy expectation $\mathcal{H} \to \mathbb{R}$, and the edge structure features in algorithms from coupled cluster theory based on using so-called single excitation operators to search the space. In this context, the vertices of the Johnson graph are often represented as length $N$ bit strings with $k$ total $1$s, so that e.g. the string 1010010 would correspond to what we will write $\{1, 3, 5\}$, and two bit strings are adjacent if their Hamming distance is $2$.

In Section~\ref{sec:exact-values}, we will find exact values for $\theta(J(N, k))$ for $k \leq 3$ and $k \geq N - 3$, and we will provide exact values of $\theta(J(N, k))$ for various low values of $N$ and $k$. In Section~\ref{sec:recursive-bounds}, we obtain bounds on $\theta(J(N, k))$ from recursive constructions. In Section~\ref{sec:coding-theory}, we will see how constant-weight codes can be used to construct collections of cliques in the Johnson graphs and thereby relate known bounds on the number of codewords in such codes to the values of $\theta(J(N, k))$, and we find that for $k = 1, 2, 4, 6, 8, 16$, the value of $\theta(J(2k, k))$ is the $k$'th Catalan number. The construction of constant-weight codes is, on the other hand, closely related to the construction of combinatorial designs, and in Section~\ref{sec:design-theory}, we will see how this relation offers additional bounds on $\theta(J(N, k))$ and see how the existence of Steiner systems implies that the clique covering numbers of corresponding Johnson graphs are in a sense as small as they can get.

\textbf{Acknowledgements.} The author thanks Patrick Ettenhuber for introducing the problem and for explaining the quantum chemistry context in which it came up, and Janus Halleløv Wesenberg for useful comments on an earlier draft.

\section{Exact values of the clique covering numbers}
\label{sec:exact-values}
The maximal cliques of $J(N, k)$ may be classified as follows, adapted from \cite{johnsonauto,shuldiner2022cliquestructurejohnsongraphs}:

\begin{thm}
    \label{thm:classification}
    Let $N > 2$. The vertices of a maximal clique in $J(N, k)$ are either of the form
    \begin{itemize}
        \item $A_S^N = \{ S \cup \{ x \} \mid x \in [N] \setminus S \}$ for an $S \in \calP_{k-1}([N])$, $k < N - 1$, or
        \item $B_S^N = \{ S \setminus \{x\} \mid x \in S\}$ for an $S \in \calP_{k+1}([N])$, $k > 1$.
    \end{itemize}
    Moreover these sets are distinct maximal cliques. For $N = 2$, $k = 1$, the only maximal clique is the graph itself.
\end{thm}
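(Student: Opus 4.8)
The plan is to first confirm that the sets $A_S^N$ and $B_S^N$ are cliques, then show that every sufficiently large clique is contained in exactly one set of these two types, and finally deduce the maximality, distinctness, and boundary-case statements. To see that $A_S^N$ is a clique, note that for distinct $x, y \in [N] \setminus S$ we have $(S \cup \{x\}) \cap (S \cup \{y\}) = S$ of cardinality $k-1$, so any two of its vertices are adjacent; dually, for distinct $x, y \in S$ we have $(S \setminus \{x\}) \cap (S \setminus \{y\}) = S \setminus \{x, y\}$ of cardinality $k-1$, so $B_S^N$ is a clique as well. Here $\lvert A_S^N \rvert = N - k + 1$ and $\lvert B_S^N \rvert = k + 1$.

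The heart of the argument is an analysis of the common neighbourhood of a single edge. Given adjacent vertices $V_1, V_2$, write $T = V_1 \cap V_2$ (so $\lvert T \rvert = k-1$) and $S = V_1 \cup V_2$ (so $\lvert S \rvert = k+1$), with $V_1 = T \cup \{a\}$ and $V_2 = T \cup \{b\}$. For any common neighbour $W$, setting $t = \lvert W \cap T \rvert$, adjacency to both $V_1$ and $V_2$ forces $t + [a \in W] = t + [b \in W] = k - 1$, so $a \in W$ if and only if $b \in W$. If $a, b \in W$ then $t = k-2$ and $W = (T \setminus \{d\}) \cup \{a, b\} \subseteq S$, so $W \in B_S^N$; if $a, b \notin W$ then $t = k-1$, whence $T \subseteq W$ and $W \in A_T^N$. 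Thus the common neighbours of the edge split into the families $A_T^N \setminus \{V_1, V_2\}$ and $B_S^N \setminus \{V_1, V_2\}$. A direct computation shows that a vertex $T \cup \{c\}$ from the first family (with $c \notin S$) and a vertex $(T \setminus \{d\}) \cup \{a, b\}$ from the second meet in $T \setminus \{d\}$, of cardinality $k-2$, hence are never adjacent. Consequently any clique containing the edge $\{V_1, V_2\}$ lies entirely within $A_T^N$ or entirely within $B_S^N$, so every clique with at least three vertices is contained in a unique such set.

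It remains to turn this into the classification. First I would check that for $N > 2$ every maximal clique has at least three vertices: through any edge one of $A_T^N$, $B_S^N$ has size at least three unless $N - k + 1 \le 2$ and $k + 1 \le 2$ simultaneously, which forces $N \le 2$. Hence for $N > 2$ every maximal clique equals some $A_S^N$ or $B_S^N$. Maximality of $A_S^N$ exactly when $k < N - 1$ (and of $B_S^N$ when $k > 1$) follows from the splitting: a vertex extending $A_S^N$ would have to lie in a $B$-family and thus fail to be adjacent to a third vertex of $A_S^N$, which exists precisely when $\lvert A_S^N \rvert = N - k + 1 \ge 3$; the degenerate edges arising at $k = N-1$ or $k = 1$ are absorbed into a larger clique of the opposite type, explaining the stated constraints on $k$. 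Distinctness is then immediate, since $S = \bigcap_{V \in A_S^N} V$ recovers $S$ from $A_S^N$ and $S = \bigcup_{V \in B_S^N} V$ recovers it from $B_S^N$, while an $A$-family and a $B$-family have common intersections of cardinality $k-1$ and $0$ respectively, which differ whenever both types occur (i.e. $k \ge 2$); the case $N = 2$ is handled directly. I expect the common-neighbourhood computation, and in particular the verification that the two arms are mutually non-adjacent, to be the main obstacle; once it is in place, maximality, distinctness, and the bookkeeping of the boundary values of $k$ are routine.
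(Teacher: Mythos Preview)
The paper does not actually prove this theorem: it is stated with the remark ``adapted from \cite{johnsonauto,shuldiner2022cliquestructurejohnsongraphs}'' and used as a black box, so there is no in-paper argument to compare yours against.

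That said, your argument is correct and is essentially the standard one. The key step---classifying the common neighbours of an edge $\{V_1,V_2\}$ into the two arms $A_{V_1\cap V_2}^N$ and $B_{V_1\cup V_2}^N$ and checking that vertices from different arms are never adjacent---is exactly what drives the cited proofs, and you carry it out cleanly. Your handling of the boundary cases (why $A_S^N$ fails to be maximal at $k=N-1$ and $B_S^N$ at $k=1$) and of distinctness (recovering $S$ as $\bigcap A_S^N$ or $\bigcup B_S^N$, and noting $\bigcap B_S^N=\emptyset\neq\bigcap A_T^N$ when $k\ge 2$) is also fine. One small point you leave implicit is the \emph{uniqueness} in ``contained in a unique such set'': you should note that a clique of size at least three cannot sit inside two distinct $A_T^N$'s (their common part has at most one vertex), nor two distinct $B_S^N$'s, nor simultaneously inside an $A_T^N$ and a $B_{S'}^N$ (whose intersection has at most two vertices); this is routine and follows from your setup, but is worth one sentence.
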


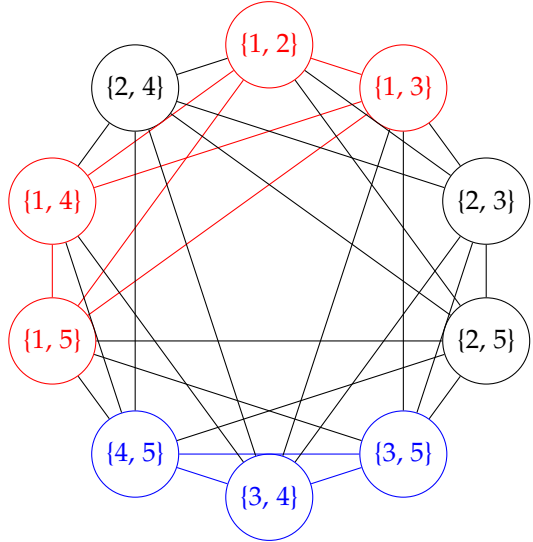
\begin{figure}[ht]
    \begin{tikzpicture}
    \node[draw, circle, red] (12) at (90:3) {\{1, 2\}};
    \node[draw, circle, red] (13) at (54:3) {\{1, 3\}};
    \node[draw, circle, red] (14) at (-198:3) {\{1, 4\}};
    \node[draw, circle, red] (15) at (-162:3) {\{1, 5\}};
    \node[draw, circle] (23) at (18:3) {\{2, 3\}};
    \node[draw, circle] (24) at (-234:3) {\{2, 4\}};
    \node[draw, circle] (25) at (-18:3) {\{2, 5\}};
    \node[draw, circle, blue] (34) at (-90:3) {\{3, 4\}};
    \node[draw, circle, blue] (35) at (-54:3) {\{3, 5\}};
    \node[draw, circle, blue] (45) at (-126:3) {\{4, 5\}};

    \foreach \i/\j in {12/13, 12/14, 12/15, 13/14, 13/15, 14/15} {
        \draw[red] (\i) -- (\j);
    }

    \foreach \i/\j in {34/35, 34/45, 35/45} {
        \draw[blue] (\i) -- (\j);
    }

    \foreach \i/\j in {12/23, 12/24, 12/25, 13/23, 13/34, 13/35, 14/24,
                         14/34, 14/45, 15/35, 15/25, 15/45, 23/24, 23/34,
                         23/35, 23/25, 24/25, 24/34, 24/45, 25/35, 25/45} {
        \draw (\i) -- (\j);
    }
\end{tikzpicture}
    \caption{The graph $J(5, 2)$. The clique $A_{\{1\}}^5$ is highlighted in red, and the clique $B_{\{3,4,5\}}^5$ is highlighted in blue.}
    \label{fig:J52}
\end{figure}

That the edge case conditions on $k$ are necessary can be seen by verifying the case $N = 3$, $k \in \{1, 2\}$. In \cite{shuldiner2022cliquestructurejohnsongraphs}, it is shown that every edge in $J(N, k)$ belongs to exactly one of these maximal cliques, so that in particular, the edge clique covering number, i.e. the cardinality of the smallest clique cover covering all edges of the graph, is simply the number of different maximal cliques. Here, the focus is on the vertex clique covering number, which is generally smaller than the edge clique covering number.

We will refer to a clique of the form $A_S^N$ as being of \emph{type A}, and a clique of the form $B_S^N$ as being of \emph{type B}. Similarly, we will say that a clique cover will be of \emph{type A} (resp. B) if all of its elements are of type A (resp. B). If a clique cover contains elements of both types, we will call it \emph{mixed}. See Figure~\ref{fig:J52} for an example of the two types of cliques in $J(5, 2)$.

In particular, one may consider the problem of finding $\theta(J(N, k))$ for a specific Johnson graph as an instance of the set cover problem. Fixed instances can be solved to optimality for small values of $N$ and $k$ with off-the-shelf integer linear programming  (ILP) solvers. Heuristic local search solvers can also provide upper bounds, in some cases faster than ILP solvers can, and in Table~\ref{tab:exact} we show the lower bounds obtained through ILP solvers, and the best upper bounds obtained from the combination of ILP solvers with a local search algorithm based on simulated annealing.

\begin{table}[h]
    \centering
    \begin{tabular}{r|rrrrrrrrrrrrrr}
        $N$ & $k=1$ & 2&3&4&5&6&7&8&9&10&11&12&13&14 \\
        \hline
        2 & 1 & &&&&&& \\
        3 & 1 & 1 &&&&&& \\
        4 & 1 & 2 & 1 &&&&& \\
        5 & 1 & 3 & 3 & 1 &&&& \\
        6 & 1 & 4 & 6 & 4 & 1 &&& \\
        7 & 1 & 5 & 9 & 9 & 5 & 1 && \\
        8 & 1 & 6 & 12 & 14 & 12 & 6 & 1 & \\
        9 & 1 & 7 & 16 & 25 & 25 & 16 & 7 & 1 \\
        10 & 1 & 8 & 20 & 40 & 46 & 40 & 20 & 8 & 1  \\
        11 & 1 & 9 & 25 & 56 & \scriptsize{74--77} &\scriptsize{74--77}&56& 25 & 9 & 1 \\
        12 & 1 & 10 & 30 &  \scriptsize{71--76} &\scriptsize{110--115}&132&\scriptsize{110--115}& \scriptsize{71--76} & 30 & 10 & 1 \\
        13 & 1 & 11 & 36 &  \scriptsize{92-110}&\scriptsize{154-185}&\scriptsize{224-280}&\scriptsize{224-280}&\scriptsize{154-185}&\scriptsize{92-110}& 36 & 11 & 1 \\
        14 & 1 & 12 & 42 &  \scriptsize{117-141}&\scriptsize{211-259}&\scriptsize{349-476}& \scriptsize{429-588} &\scriptsize{349-476}&\scriptsize{211-259}&\scriptsize{117-141}& 42 & 12 & 1\\
        15 & 1 & 13 & 49 &  \scriptsize{146-182}&\scriptsize{283-363}&\scriptsize{513-816}&\scriptsize{716-1110}&\scriptsize{716-1110}&\scriptsize{513-816}&\scriptsize{283-363}&\scriptsize{146-182}& 49 & 13 & 1
    \end{tabular}
    \bigskip
    \caption{Values of $\theta(J(N, k))$ for small $N$ and $k$, as obtained through the combination of an integer programming solver and a local search algorithm based on simulated annealing. For $N \geq 11$ we report intervals of possible values, e.g. $74 \leq \theta(J(11, 5)) \leq 77$. Obtaining any one of these bounds takes a few hours on a 64 core Intel(R) Xeon(R) Platinum 8272CL CPU @ 2.60GHz running the Gurobi parallel ILP solver on the natural ILP formulation of set cover, without taking into account any of the symmetries of the problem, with the exception of $N = 11$, $k = 4$, which ran to completion in 13 hours.}
    \label{tab:exact}
\end{table}

From here we can already read off a few patterns; note first that $\theta(J(N, k)) = \theta(J(N, N - k))$ for all $N$ and $k$ since the graphs are isomorphic: A set of maximal cliques covering $J(N, k)$ can be turned into a cover of $J(N, N-k)$ by replacing $A_S$ with $B_{S^c}$ and vice versa.

\begin{prop}
    We have $\theta(J(N, 1)) = 1$ for all $N > 1$.
\end{prop}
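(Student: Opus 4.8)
The plan is to recognize that $J(N, 1)$ is nothing but the complete graph on $N$ vertices, so that a single clique covers it. First I would unpack the adjacency condition for $k = 1$: the vertices are the singletons $\{1\}, \dots, \{N\}$, and any two distinct singletons $\{i\}$ and $\{j\}$ satisfy $\lvert \{i\} \cap \{j\} \rvert = 0 = k - 1$. Hence every pair of distinct vertices is adjacent, i.e. $J(N, 1) \cong K_N$. The only point worth pausing on is precisely this reduction: the edge condition $\lvert S_1 \cap S_2 \rvert = k - 1$ becomes vacuous — automatically satisfied by all distinct vertices — exactly because $k = 1$ forces the required intersection size to be $0$, which two distinct singletons always realize.

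With this identification, the entire vertex set $\calP_1([N])$ is itself a clique, so the one-element collection consisting of the whole graph is a clique cover, giving $\theta(J(N, 1)) \leq 1$. Alternatively, one may appeal directly to Theorem~\ref{thm:classification}: for $k = 1$ the sole type-A clique is $A_\emptyset^N = \{\{x\} \mid x \in [N]\}$, which consists of all the singletons and therefore equals the whole graph; and for the edge case $N = 2$, $k = 1$, the theorem already states that the only maximal clique is the graph itself. Either route exhibits an explicit cover of cardinality one.

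For the matching lower bound, since $N > 1$ the vertex set is nonempty, so any clique cover must contain at least one clique, whence $\theta(J(N, 1)) \geq 1$. Combining the two inequalities yields $\theta(J(N, 1)) = 1$. I do not anticipate any genuine obstacle here beyond the initial observation that the $k = 1$ adjacency condition collapses to the complete graph; the remainder is immediate from the definition of the clique covering number.
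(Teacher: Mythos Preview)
Your proof is correct and follows exactly the same idea as the paper's: both observe that distinct singletons have empty intersection, so the adjacency condition $\lvert S_1 \cap S_2 \rvert = k-1 = 0$ is satisfied by every pair and $J(N,1)$ is complete. Your write-up is simply more expansive, adding the (trivial) lower bound and the alternative via Theorem~\ref{thm:classification}, but the substance is identical.
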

\begin{proof}
    The vertices of the graph are singletons, and $\{i\} \cap \{j \} = 0$ whenever $i \not= j$, so the graph is complete.
\end{proof}

In the case $k = 2$, note that a minimal clique cover of type A is $\{A_{\{1\}}^N, \dots, A_{\{N-1\}}^N\}$, but the result below shows that there are smaller mixed covers.

\begin{thm}
    \label{thm:JN2}
    We have $\theta(J(N, 2)) = N - 2$ for all $N > 2$.
\end{thm}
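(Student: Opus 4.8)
The plan is to establish the two inequalities $\theta(J(N,2)) \le N-2$ and $\theta(J(N,2)) \ge N-2$ separately. Throughout I would invoke the observation from the introduction that an optimal clique cover may be taken to consist of maximal cliques only, so that by Theorem~\ref{thm:classification} every clique in the cover is of \emph{type A}, i.e.\ of the form $A_{\{i\}}^N = \{\{i,x\} \mid x \in [N]\setminus\{i\}\}$ for some $i \in [N]$ (the set of all pairs containing a fixed point $i$), or of \emph{type B}, i.e.\ of the form $B_T^N = \{\{a,b\},\{a,c\},\{b,c\}\}$ for a $3$-element set $T = \{a,b,c\}$ (the three pairs inside a fixed triangle $T$).

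For the upper bound I would exhibit an explicit mixed cover of size $N-2$. Take the $N-3$ type A cliques $A_{\{1\}}^N, \dots, A_{\{N-3\}}^N$ together with the single type B clique $B_{\{N-2, N-1, N\}}^N$. Any pair $\{i,j\}$ with $\min(i,j) \le N-3$ is covered by the corresponding type A clique, while the only remaining pairs are those contained in $\{N-2, N-1, N\}$, which are exactly the three vertices of the added type B clique. This uses $(N-3) + 1 = N - 2$ cliques, and for $N = 3$ the construction degenerates to the single clique $B_{\{1,2,3\}}^N$, still of size $N - 2 = 1$.

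The substance of the statement is the matching lower bound. Given any cover by maximal cliques, let $I \subseteq [N]$ be the set of points serving as centres of the type A cliques used, put $a = |I|$, and let $b$ be the number of type B cliques, so the total number of cliques is at least $a + b$. Set $J = [N] \setminus I$ and $m = |J| = N - a$. A pair $\{i,j\} \subseteq J$ contains no element of $I$, hence is covered by no type A clique, so all $\binom{m}{2}$ pairs inside $J$ must be covered by the type B cliques. Since each type B clique consists of only three vertices, it covers at most three of these pairs, giving $b \ge \tfrac13\binom{m}{2}$. Therefore
\[
    a + b \ge (N - m) + \frac{1}{3}\binom{m}{2} = N + \frac{m^2 - 7m}{6} = N - 2 + \frac{(m-3)(m-4)}{6}.
\]
Since $m = N - a$ is a non-negative integer, the factors $m-3$ and $m-4$ are integers differing by $1$, so their product is $\ge 0$, and the right-hand side is at least $N-2$, as required.

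The main obstacle — and the point where the argument must be exactly tight — is this final inequality: the crude bound $b \ge \tfrac13\binom{m}{2}$, allowing each triangle to cover three pairs, is precisely balanced against the cost $a = N-m$ of the type A cliques, and the quadratic closes up to $N-2$ with equality attained only at $m \in \{3,4\}$. To be safe I would sanity-check the extreme regimes against this bound, namely the pure type A cover $\{A_{\{1\}}^N, \dots, A_{\{N-1\}}^N\}$ (which has $m=1$ and yields $N-1$) and covers dominated by type B cliques (large $m$), to confirm that neither beats $N-2$, and I would verify the small cases $N = 3, 4$ by hand, where the edge conditions on $k$ in Theorem~\ref{thm:classification} (type A requires $N>3$ for $k=2$) have to be watched.
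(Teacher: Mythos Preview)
Your proof is correct and follows essentially the same approach as the paper: the same explicit cover for the upper bound, and the same counting argument for the lower bound (bounding the number of type B cliques needed to cover the $\binom{m}{2}$ pairs missed by the type A centres). Your explicit factorisation $(m-3)(m-4)/6 \ge 0$ is a slight improvement over the paper, which simply asserts that $N - m + \lceil \binom{m}{2}/3 \rceil \ge N-2$ for all $m \ge 0$ without displaying the verification.
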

\begin{proof}
    We claim that
    \[
        \{A_{\{1\}}^N, \dots, A_{\{N-3\}}^N, B_{\{N-2,N-1,N\}}^N\}
    \]
    is a minimal clique cover. Let $\{n_1, n_2\} \in V(J(N, 2))$, and assume that $n_1 < n_2$. If $n_1 \leq N - 3$, then $\{n_1, n_2\} \in A_{\{n_1\}}^N$. Otherwise, $\{n_1, n_2\} \subseteq \{N - 2, N - 1, N\}$, so $\{n_1, n_2\} \in B_{\{N-2,N-1,N\}}^N$, so the set is a clique cover.

    To see that no smaller cover exists, suppose that a given clique cover contains $n_A$ cliques of the type A in, and $n_B$ of type B. Assume that $0 \leq n_A \leq N$. As such, there are $m = N - n_A$ elements of $[N]$ whose corresponding singleton sets are not among the singletons defining the $n_A$ cliques. These $m$ elements correspond to $\binom{m}{2}$ vertices in $J(N, 2)$ that must then be covered by the $n_B$ remaining cliques. Each of these cliques cover at most $3$ elements, so we need at least $\left\lceil \binom{m}{2} / 3 \right\rceil$ of them. Put together, we may bound the total number of cliques by
    \[
        n_A + n_B \geq N - m + \left\lceil \binom{m}{2}/3 \right\rceil,
    \]
    so we need at least $N - 2$ for all $m \geq 0$.
\end{proof}

Similarly, for the case $k = 3$, we can explicitly construct a minimal clique cover as follows: The idea is to partition $[N]$ into two parts such that every $3$-element set by the pigeon hole principle has a $2$-element overlap with those two parts. Concretely, let $N > 3$ and define $S_N \subseteq \calP_2([N])$ by
\[
    S_N = \{\{i, j\} \subseteq [N] \mid i \not= j, (\{i, j\} \subseteq \{1, \dots, \lfloor \tfrac{N}{2} \rfloor\} \mbox{ or } \{i, j\} \subseteq \{\lfloor \tfrac{N}{2} \rfloor + 1, \dots, N \} )\}.
\]
This defines a set $\mathcal{C}_N$ of maximal cliques in $J(N, 3)$ by
\[
    \mathcal{C}_N = \{ A_S^N \mid S \in S_N \}.
\]
That this is a minimal clique cover among all clique covers of type A is known and can be seen as a consequence of Mantel's theorem, but we need to be a bit more careful to make sure that no smaller mixed covers exist.

\begin{lem}
    For any $N > 3$, the cardinality of $\mathcal{C}_N$, is $\lvert \mathcal{C}_N \rvert = \lvert S_N \rvert = \left\lfloor \tfrac{(N-1)^2}{4} \right\rfloor.$
\end{lem}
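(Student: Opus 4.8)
The plan is to reduce the statement to two independent facts: that the listed equality $\lvert \mathcal{C}_N \rvert = \lvert S_N \rvert$ is a matter of injectivity, and that the remaining value $\lvert S_N \rvert$ is a routine count that I can evaluate by splitting $[N]$ into its two blocks and doing a parity case analysis.

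First I would argue $\lvert \mathcal{C}_N \rvert = \lvert S_N \rvert$. Since $\mathcal{C}_N = \{A_S^N \mid S \in S_N\}$ is by definition the image of $S_N$ under $S \mapsto A_S^N$, it suffices to show this map is injective on $\calP_2([N])$. This is part of the distinctness assertion in Theorem~\ref{thm:classification}, but it can also be seen directly: whenever $\lvert A_S^N \rvert \geq 2$ one recovers $S$ as the intersection $\bigcap_{T \in A_S^N} T$, and here $\lvert A_S^N \rvert = N - (k-1) = N - 2 \geq 2$ because $N > 3$. This intersection argument is worth spelling out precisely because it covers the boundary case $N = 4$ uniformly, where the $A_S^N$ are genuine cliques but not maximal cliques (the condition $k < N-1$ of Theorem~\ref{thm:classification} fails), so I do not want to lean on maximality here.

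Next I would count $\lvert S_N \rvert$. Writing $a = \lfloor \tfrac{N}{2} \rfloor$ and $b = N - a = \lceil \tfrac{N}{2} \rceil$, the set $S_N$ is the disjoint union of the $2$-element subsets of $\{1, \dots, a\}$ and those of $\{a+1, \dots, N\}$, so $\lvert S_N \rvert = \binom{a}{2} + \binom{b}{2}$. Finally I would evaluate this by parity: for $N = 2m$ even, $a = b = m$ gives $\lvert S_N \rvert = m(m-1) = m^2 - m = \lfloor (2m-1)^2/4 \rfloor$; for $N = 2m+1$ odd, $a = m$ and $b = m+1$ give $\lvert S_N \rvert = \binom{m}{2} + \binom{m+1}{2} = m^2 = \lfloor (2m)^2/4 \rfloor$. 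In both cases the result equals $\lfloor (N-1)^2/4 \rfloor$, completing the proof. I do not anticipate a real obstacle in this lemma; the only subtlety is the injectivity step at $N = 4$, and the intersection formula disposes of it cleanly, leaving the rest as bookkeeping.
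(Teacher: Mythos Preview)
Your proof is correct and follows essentially the same route as the paper: a parity split $N=2m$ versus $N=2m+1$ to verify $\binom{\lfloor N/2\rfloor}{2}+\binom{\lceil N/2\rceil}{2}=\lfloor(N-1)^2/4\rfloor$. You are in fact slightly more careful than the paper, which silently treats $\lvert\mathcal{C}_N\rvert=\lvert S_N\rvert$ as obvious; your intersection argument for injectivity (and the remark that it still goes through at $N=4$ where the $A_S^N$ are non-maximal) is a welcome addition.
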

\begin{proof}
    If $N$ is even, $N = 2m$, then
    \[
        \left\lfloor \frac{(N-1)^2}{4} \right\rfloor = \left\lfloor \frac{N^2}{4} - \frac{N}{2} + \frac{1}{4}\right\rfloor = m^2 - m = \binom{m}{2}+\binom{m}{2} = \lvert S_N \rvert,
    \]
    and if $N$ is odd, $N = 2m + 1$, then
    \[
        \left\lfloor \frac{(N-1)^2}{4} \right\rfloor = \frac{(2m)^2}{4} = \frac{m(m-1)}{2} + \frac{m(m+1)}{2} = \binom{m}{2} + \binom{m+1}{2} = \lvert S_N \rvert.
    \]
\end{proof}
The following result is often referred to as Goodman's bound, following \cite{Goodman1959-gm}.
\begin{thm}
    \label{thm:goodmandbound}
    In any graph with $n$ vertices and $m$ edges, there are at least
    \[
        \frac{4m}{3n} \left(m - \frac{n^2}{4}\right)
    \]
    triangles.
\end{thm}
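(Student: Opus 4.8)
The plan is to count triangles by summing over edges rather than over vertices. For an edge $uw \in E$, the number of triangles containing it equals the number of common neighbours of $u$ and $w$; call this the codegree $c(uw)$. Since each triangle contains exactly three edges, the triangle count $t$ satisfies $3t = \sum_{uw \in E} c(uw)$, and the whole problem reduces to bounding $\sum_{uw \in E} c(uw)$ from below.

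Next I would bound each codegree. Writing $d_v$ for the degree of $v$ and using that the neighbourhoods $N(u)$ and $N(w)$ both sit inside the $n$-element vertex set, inclusion--exclusion gives
\[
    c(uw) = |N(u) \cap N(w)| \geq d_u + d_w - n.
\]
One should check that neither $u$ nor $w$ is miscounted: since $u \notin N(u)$ and $w \notin N(w)$, the right-hand side genuinely counts only triangle-completing vertices, and when it is negative the inequality is vacuous. Summing over edges then yields
\[
    3t \geq \sum_{uw \in E} (d_u + d_w) - mn.
\]

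The key algebraic identity is $\sum_{uw \in E}(d_u + d_w) = \sum_{v} d_v^2$, which holds because each vertex $v$ contributes $d_v$ to this sum once for every one of its $d_v$ incident edges. I would then apply the Cauchy--Schwarz inequality (equivalently, convexity of $x \mapsto x^2$) in the form
\[
    \sum_v d_v^2 \geq \frac{\left(\sum_v d_v\right)^2}{n} = \frac{(2m)^2}{n} = \frac{4m^2}{n},
\]
using $\sum_v d_v = 2m$. Combining the three displayed facts gives $3t \geq 4m^2/n - mn$, which rearranges exactly to the claimed $t \geq \tfrac{4m}{3n}\left(m - \tfrac{n^2}{4}\right)$; one can sanity-check that equality holds throughout for $G = K_n$.

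I expect the only delicate point to be the codegree estimate and the bookkeeping around whether $u$ and $w$ are themselves counted among common neighbours; once that inequality is pinned down, the edge--degree identity and the Cauchy--Schwarz step are routine. An alternative route through the Goodman identity counting ``mixed'' triples is available, but it most naturally bounds the combined number of triangles in $G$ and its complement, so recovering the bound for $G$ alone is cleaner via the codegree argument above.
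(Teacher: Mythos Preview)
Your argument is correct: the codegree bound $c(uw)\ge d_u+d_w-n$ holds regardless of sign, so summing it over all edges is legitimate, and the remaining steps (the identity $\sum_{uw\in E}(d_u+d_w)=\sum_v d_v^2$ together with Cauchy--Schwarz on the degree sequence) are standard and yield exactly the stated inequality.

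There is nothing to compare against in the paper itself: Theorem~\ref{thm:goodmandbound} is quoted as a known result attributed to Goodman~\cite{Goodman1959-gm} and is not proved in the text. Your proof is the usual modern derivation of this bound and would serve perfectly well if the paper wanted to include one.
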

\begin{thm}
    \label{thm:JN3}
    For any $N > 3$, the set $\mathcal{C}_N$ defined above is a clique cover of $J(N, 3)$. For $N > 7$ it is a minimal clique cover.
\end{thm}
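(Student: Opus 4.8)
The first claim, that $\mathcal{C}_N$ covers $J(N,3)$, is the easy half and follows from the pigeonhole principle already alluded to above: given a triple $\{a,b,c\} \in V(J(N,3))$, partitioning $[N]$ into the two halves $\{1,\dots,\lfloor N/2\rfloor\}$ and $\{\lfloor N/2\rfloor+1,\dots,N\}$ forces two of $a,b,c$ to lie in a common half, say $\{a,b\}$; then $\{a,b\} \in S_N$ and $\{a,b,c\} \in A_{\{a,b\}}^N \in \mathcal{C}_N$. So I would dispatch this in one line and spend all the effort on the lower bound $\theta(J(N,3)) \ge \lfloor (N-1)^2/4\rfloor$ for $N > 7$.

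For the lower bound, I would first invoke the classification (Theorem~\ref{thm:classification}) together with the remark that a smallest cover may be taken to consist of maximal cliques (all of which are type~A or type~B here, since $N>7$), so that any cover under consideration splits into $n_A$ type-A cliques and $n_B$ type-B cliques. The type-A cliques are indexed by a set $P \subseteq \calP_2([N])$ of pairs with $\lvert P \rvert = n_A$, which I view as the edge set of a graph $G$ on the vertex set $[N]$. The key observation is that a triple $\{a,b,c\}$ lies in some type-A clique $A_S^N$ precisely when one of its three pairs lies in $P$, i.e. when $\{a,b,c\}$ spans an edge of $G$; hence the triples left uncovered by the type-A cliques are exactly the triangles of the complement $\overline{G}$. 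Since each type-B clique $B_T^N$ consists of the four $3$-subsets of a $4$-set $T$, it can cover at most four of these leftover triangles, so if $\overline{G}$ has $t$ triangles then $n_B \ge t/4$.

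At this point I would apply Goodman's bound (Theorem~\ref{thm:goodmandbound}) to $\overline{G}$, which has $N$ vertices and $m := \binom{N}{2} - n_A$ edges: writing $t \ge \tfrac{4m}{3N}\bigl(m - \tfrac{N^2}{4}\bigr)$ and $n_A = \binom{N}{2} - m$, the total size of the cover is bounded below by
\[
    n_A + n_B \;\ge\; \binom{N}{2} - m + \frac{m}{3N}\left(m - \frac{N^2}{4}\right) =: g(m)
\]
whenever $m > N^2/4$, while for $m \le N^2/4$ one simply uses $n_A + n_B \ge n_A = \binom{N}{2} - m$. The plan is then to minimize this lower bound over all admissible real $m$ and show the minimum strictly exceeds $\lfloor (N-1)^2/4\rfloor - 1$; since $n_A + n_B$ is an integer, this forces $n_A + n_B \ge \lfloor (N-1)^2/4\rfloor = \lvert \mathcal{C}_N \rvert$, which is exactly minimality.

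The optimization is where the real work and the hypothesis $N>7$ live, and I expect it to be the main obstacle. For $m \le N^2/4$ the bound $\binom{N}{2}-m$ is decreasing, and its minimum over integers $\binom{N}{2}-\lfloor N^2/4\rfloor$ equals $\lfloor (N-1)^2/4\rfloor$ exactly by a short even/odd computation, so that range is harmless. For $m > N^2/4$ the function $g$ is a convex quadratic with unconstrained minimizer $m^\ast = (N^2+12N)/8$, and comparing $m^\ast$ with $N^2/4$ shows $m^\ast \le N^2/4$ exactly when $N \ge 12$. Thus for $N \ge 12$, $g$ is increasing on the relevant range, with infimum $g(N^2/4) = \binom{N}{2} - N^2/4 = \tfrac{N(N-2)}{4} \ge \lfloor (N-1)^2/4\rfloor - \tfrac14$, which suffices after rounding. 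For the finitely many remaining values $N \in \{8,9,10,11\}$ one has $m^\ast > N^2/4$, and I would just evaluate
\[
    g(m^\ast) \;=\; \frac{3N^2}{8} - \frac{5N}{4} - \frac{N^3}{192}
\]
in each case, checking directly that it exceeds $\lfloor (N-1)^2/4\rfloor - 1$ (e.g. $g(m^\ast) = 34/3 > 11$ for $N=8$). The same evaluation for $N = 6,7$ returns a value below the corresponding threshold, which is precisely why Goodman's bound is too weak to establish minimality there, and why the statement is restricted to $N > 7$.
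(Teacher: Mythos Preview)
Your proof is correct and follows essentially the same route as the paper's: both use the identification of type-A cliques with edges of a graph on $[N]$, observe that the uncovered triples are exactly the triangles of the complement, bound the number of those triangles via Goodman's bound, and absorb the type-B cliques via the ``at most four triples each'' estimate. The only organizational difference is that the paper fixes the budget $n_A+n_B=\lfloor (N-1)^2/4\rfloor-1$ and maximizes the number of coverable vertices as a quadratic in $n_A$, whereas you directly minimize the lower bound $g(m)$ for $n_A+n_B$ as a quadratic in $m=\binom{N}{2}-n_A$; these optimizations are dual to one another (your $m^\ast$ corresponds exactly to the paper's $\widetilde{n_a}=\tfrac{3}{8}N^2-2N$), and both isolate the same threshold $N\ge 12$ beyond which the optimum sits at the boundary, leaving $N\in\{8,9,10,11\}$ to be checked numerically.
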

\begin{proof}
    Let $\{i, j, k\} \in V(J(N, 3))$. Two of these three elements, say $i$ and $j$, will form a subset $S = \{i, j\}$ of either $ \{1, \dots, \lfloor \tfrac{N}{2} \rfloor\}$ or $\{\lfloor \tfrac{N}{2} \rfloor + 1, \dots, N \}$, and therefore belong to $S_N$, so $\{i, j, k\} = S \cup \{k\}$ is an element of the maximal clique $A_S^N \in \mathcal{C}_N$.

    To show that this construction is optimal for $N > 7$, consider a collection containing $n_a$ and $n_b$ cliques of types A and B respectively, consisting of $n_a + n_b = \left\lfloor \tfrac{(N-1)^2}{4} \right\rfloor - 1$ cliques in total, and let us bound from above the number of elements of $V(J(N, 3))$ that this can cover.

    First, we will ignore cliques of type B and provide an upper bound on the number of vertices that can be covered by a union of cliques of type A. Each clique of type A is defined by a 2-element subset of $[N]$ and can be identified with an edge in the complete graph $K_N$ on $N$ vertices; let $E_A$ denote the set of $n_A$ edges obtained from the cover. From this perspective, an element $\{i, j, k\} \in V(J(N, 3))$ is covered by the clique if the corresponding triangle in $K_N$ contains at least one of the edges in $E_A$. On the other hand, an element of $V(J(N, 3))$ is \emph{not} covered if the edges of its corresponding triangle are in the complement of $E_A$. The maximum number of elements in $V(J(N, 3))$ that can be covered by the cliques can therefore be bounded from above by $\binom{N}{3} - t$, where $t$ is the minimum number of triangles in any graph on $N$ vertices with $m = \lvert E(K_N) \rvert - \lvert E_A \rvert = N(N-1)/2 - n_a$ edges.

    Since each clique of type B can cover at most $4$ elements of $V(J(N, 3))$, by Theorem~\ref{thm:goodmandbound}, an immediate upper bound on the total number of elements that are covered is therefore
    \begin{align*}
        c(n_a) &= \binom{N}{3} - \frac{4m}{3N}\left(m - \frac{N^2}{4}\right) + 4n_b \\
          &= \binom{N}{3} - \frac{2N(N-1) - 4n_a}{3N} \left(\frac{N(N-1)}{2} - n_a - \frac{N^2}{4}\right) + 4\left(\left\lfloor \frac{(N-1)^2}{4} \right\rfloor -1 - n_a\right).
    \end{align*}
    For fixed $N$, we may treat this is a quadratic polynomial in $n_a$, the derivative of which is
    \begin{align*}
        c'(n_a) &= - 2 \frac{4}{3N} n_a +\frac{2N(N-1)}{3N} +\frac{4N(N-1)}{6N} - \frac{4}{3N} \frac{N^2}{4} - 4 \\
         &= -\frac{8}{3N} n_a + N - \frac{16}{3}.
    \end{align*}
    so $c$ is maximized when $n_a$ is obtained by rounding $\widetilde{n_a} = \min\left(\frac{3}{8}N^2 - 2N, \left\lfloor \tfrac{(N-1)^2}{4}\right\rfloor - 1\right)$.

    When $N \geq 12$, this means that $n_a = \left\lfloor \tfrac{(N-1)^2}{4} \right\rfloor - 1$ and $n_b = 0$, which completes the proof for all but finitely many cases.

    For $8 \leq N \leq 11$, we check by hand that the value of $c$ is lower than $\binom{N}{3}$ at the candidate values of $n_a$; see Table~\ref{tab:upperbound}.

    \begin{table}
        \begin{tblr}{c|c|c|c}
            $N$ & $n_a$ & $\lceil c(n_a) \rceil$ & $\binom{N}{3}$ \\
            \hline
            $8$ & $8$ & $55$ & $56$ \\
            $9$ & $12$ & $83$ & $84$ \\
            $9$ & $13$ & $83$ & $84$ \\
            $10$ & $17$ & $117$ & $120$ \\
            $10$ & $18$ & $117$ & $120$ \\
            $11$ & $23$ & $163$ & $165$ \\
            $11$ & $24$ & $163$ & $165$ \\
        \end{tblr}
        \caption{Upper bounds on the number of coverable elements for $8 \leq N \leq 11$.}
        \label{tab:upperbound}
    \end{table}

\end{proof}
This leaves the cases $N \leq 7$ for which the bound in the proof fails to rule out the usefulness of cliques of type B. Specifically, $\lceil c(n_a) \rceil \geq \binom{N}{3}$ when $N = 6$, $n_a \leq 4$, and when $N = 7$, $n_a \leq 7$. Instead, values of $\theta(J(N, 3))$ for $N \leq 7$ can be read off directly from Table~\ref{tab:exact}, but are also small enough that one can prove optimality computationally through exhaustive enumeration.

Summarizing, this gives the following values for $\theta(J(N, 3))$:
\begin{thm}
    \label{thm:J-N-3}
    For $N > 3$ we have,
    \begin{align*}
        \theta(J(N, 3)) = \begin{cases}
            1,& \text{if $N = 4$},\\
            3,& \text{if $N = 5$},\\
            \left\lfloor \frac{(N-1)^2}{4} \right\rfloor,& \text{if $N > 5$}.\\
        \end{cases}
    \end{align*}
\end{thm}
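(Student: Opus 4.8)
The plan is to assemble this statement from the results already in hand, treating the generic range and the small cases separately, since the combinatorial work has essentially been carried out in Theorem~\ref{thm:JN3}. For $N > 7$, that theorem asserts that $\mathcal{C}_N$ is a \emph{minimal} clique cover of $J(N, 3)$, and the preceding lemma computes its cardinality as $\lvert \mathcal{C}_N \rvert = \left\lfloor (N-1)^2/4 \right\rfloor$. Combining the two gives $\theta(J(N, 3)) = \left\lfloor (N-1)^2/4 \right\rfloor$ throughout this range, which is the bulk of the claim and requires no further argument.

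It then remains to handle $N \in \{4, 5, 6, 7\}$, where the Goodman-bound estimate in the proof of Theorem~\ref{thm:JN3} no longer forces $n_b = 0$. For $N = 6$ and $N = 7$ I would still use $\mathcal{C}_N$ for the upper bound: by Theorem~\ref{thm:JN3} it is a clique cover, and by the lemma its size is $\left\lfloor (N-1)^2/4 \right\rfloor$, namely $6$ and $9$ respectively. For the matching lower bound I would fall back on the computational certificates recorded in Table~\ref{tab:exact}; these graphs are small enough that the optimum can be pinned down either directly by the ILP solver or by exhaustive enumeration over collections of maximal cliques, confirming $\theta(J(6,3)) = 6$ and $\theta(J(7,3)) = 9$, both equal to $\left\lfloor (N-1)^2/4 \right\rfloor$.

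For $N = 4$ and $N = 5$, where the floor formula no longer gives the right value, the cleanest route is the isomorphism $J(N, k) \cong J(N, N-k)$ noted earlier: $\theta(J(4,3)) = \theta(J(4,1)) = 1$ by the proposition that $\theta(J(N,1)) = 1$, and $\theta(J(5,3)) = \theta(J(5,2)) = 5 - 2 = 3$ by Theorem~\ref{thm:JN2}. These two values could equally be read straight off Table~\ref{tab:exact}, but deriving them from the already-proved $k = 1$ and $k = 2$ cases avoids any appeal to computation.

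The one genuine obstacle is the lower bound for $N = 6, 7$. As the discussion following Theorem~\ref{thm:JN3} records, $\lceil c(n_a) \rceil \geq \binom{N}{3}$ for the relevant small values of $n_a$, so Goodman's bound fails by a narrow margin and cannot by itself exclude a mixed cover with few type A cliques beating $\mathcal{C}_N$. Hence the minimality of $\mathcal{C}_N$ in these two cases must be certified computationally rather than analytically; everything else in the proof is bookkeeping over the constructions and bounds established above.
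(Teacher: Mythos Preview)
Your proposal is correct and follows essentially the same route as the paper: invoke Theorem~\ref{thm:JN3} together with the cardinality lemma for $N > 7$, and fall back on computation (Table~\ref{tab:exact} or exhaustive enumeration) for the lower bounds at $N = 6, 7$. The one small refinement you add is handling $N = 4, 5$ via the isomorphism $J(N, k) \cong J(N, N-k)$ and the already-proved $k = 1, 2$ results, whereas the paper simply reads all four small cases off the table; your route is cleaner there but not materially different.
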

For $k = 4$, we have no explicit constructions of minimal clique covers for all $N$, and given the connection to design theory which we will explore below, finding such constructions might be a lot to hope for. In Table~\ref{tab:smallsolutions}, we provide minimal clique covers for $k \geq 4$ and $N \leq 11$.

\section{Recursive bounds}
\label{sec:recursive-bounds}
One may note that the first few rows of Table~\ref{tab:exact} agree with the familiar Pascal's triangle of binomial numbers but after a while grows more slowly. This suggests that $\theta(J(N, k)) \leq \binom{N-2}{k-1}$, which we prove below.

\begin{prop}
    \label{prop:recursion}
    For $N > 2$ and $k > 1$ we have
    \[
        \theta(J(N, k)) \leq \theta(J(N - 1, k - 1)) + \theta(J(N - 1, k)).
    \]
\end{prop}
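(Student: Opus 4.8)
The plan is to decompose $J(N, k)$ according to whether a vertex contains the element $N$. Write $V = V(J(N, k)) = V_1 \sqcup V_0$, where $V_1$ consists of the $k$-subsets containing $N$ and $V_0$ of those not containing $N$. The whole argument rests on recognizing these two vertex classes as copies of the two smaller Johnson graphs on the right-hand side.

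First I would identify the induced subgraphs. The map $S \cup \{N\} \mapsto S$ sends $V_1$ bijectively onto $\calP_{k-1}([N-1])$, and since $N \notin S_1, S_2$ we have $\lvert (S_1 \cup \{N\}) \cap (S_2 \cup \{N\}) \rvert = \lvert S_1 \cap S_2 \rvert + 1$; hence two vertices of $V_1$ are adjacent in $J(N, k)$ exactly when $\lvert S_1 \cap S_2 \rvert = k - 2$, so the induced subgraph on $V_1$ is isomorphic to $J(N-1, k-1)$. For $V_0$, the $k$-subsets of $[N]$ avoiding $N$ are precisely the $k$-subsets of $[N-1]$, with the same intersection sizes, so the induced subgraph on $V_0$ is literally the graph $J(N-1, k)$.

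Next, the key observation is that any clique in an induced subgraph is automatically a clique in the ambient graph $J(N, k)$, since all edges among a fixed vertex subset are retained. So I would take a minimum clique cover $\mathcal{A}$ of the induced subgraph on $V_1$ -- of size $\theta(J(N-1, k-1))$ via the isomorphism above -- together with a minimum clique cover $\mathcal{B}$ of the induced subgraph on $V_0$, of size $\theta(J(N-1, k))$. Then $\mathcal{A} \cup \mathcal{B}$ is a family of cliques in $J(N, k)$ whose vertex sets have union $V_1 \cup V_0 = V$, i.e. a clique cover of $J(N, k)$ of cardinality $\theta(J(N-1, k-1)) + \theta(J(N-1, k))$, which yields the claimed inequality.

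The argument is essentially free once the decomposition is in place; there is no real obstacle beyond the routine verification that the two vertex classes induce the asserted Johnson subgraphs. The only point requiring mild care is the boundary case $k = N - 1$, where $V_0$ is the single vertex $[N-1]$ and $J(N-1, k)$ degenerates to a one-vertex graph; interpreting its clique covering number as $1$ keeps the inequality valid (and far from tight) there. Note finally that iterating this bound together with Pascal's rule and the base cases $\theta(J(N,1)) = 1$ gives exactly $\theta(J(N,k)) \leq \binom{N-2}{k-1}$, matching the pattern read off from Table~\ref{tab:exact}.
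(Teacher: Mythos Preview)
Your proof is correct and rests on the same decomposition as the paper's: split $V(J(N,k))$ according to whether the vertex contains $N$, and cover the two pieces using minimum covers of $J(N-1,k-1)$ and $J(N-1,k)$. The execution differs slightly. The paper invokes the classification of maximal cliques (Theorem~\ref{thm:classification}) and explicitly promotes each maximal clique $A_S^{N-1}$ or $B_S^{N-1}$ in the smaller graphs to a maximal clique $A_{S\cup\{N\}}^N$, $B_{S\cup\{N\}}^N$, $A_S^N$, or $B_S^N$ in $J(N,k)$; this yields a concrete recursive construction of a cover by \emph{maximal} cliques, at the cost of some case analysis and an appeal to the classification. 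Your argument bypasses that machinery entirely by observing that the two vertex classes induce the smaller Johnson graphs and that cliques in an induced subgraph are already cliques in the ambient graph---no need to enlarge them. This is cleaner for establishing the bare inequality; the paper's version has the mild advantage that it tracks explicitly which maximal cliques appear, which feeds into the later discussion comparing the recursive cover with smaller mixed covers.
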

\begin{proof}
    The case $N = 3$ can be verified by hand, allowing us to assume that $N > 3$ to avoid having to deal the slightly complicating special cases in Theorem~\ref{thm:classification}.

    Let $\{S_1, \dots, S_n\}$ be a minimal clique cover of $J(N-1, k-1)$ and $\{T_1, \dots, T_m\}$ be a minimal clique cover of $J(N-1,k)$. Assume without loss of generality that all $S_i$ and $T_j$ are maximal cliques in their respective graphs.

    For a given $i \in [n]$, by Theorem~\ref{thm:classification}, we either have $S_i = A_S^{N-1}$ for an $S \in \calP_{k-2}([N-1])$, or $S_i = B_S^{N-1}$ for an $S \in \calP_k([N-1])$. In the first case, define
    \[
        \tilde{S}_i = A_{S \cup \{N\}}^N,
    \]
    and in the second case define
    \[
        \tilde{S}_i = B_{S \cup \{N\}}^N.
    \]
    In either case, $\tilde{S}_i$ is a maximal clique in $J(N,k)$.
    
    Similarly, for each $T_j$, either there exists $S \in \calP_{k-1}([N-1])$ such that $T_j = A_S^{N-1}$ or there exists an $S \in \calP_{k+1}([N-1])$ such that $T_j = B_S^{N-1}$. In the first case, define
    \[
        \tilde{T}_j = A_S^N
    \]
    and in the second case simply define $\tilde{T}_j = B_S^N = T_j$. We claim that
    \[
        \{ \tilde{S}_1, \dots, \tilde{S}_n, \tilde{T}_1, \dots, \tilde{T}_m \}
    \]
    is a clique cover of $J(N, k)$, from which it follows that
    \[
        \theta(J(N, k)) \leq n + m = \theta(J(N - 1, k - 1)) + \theta(J(N - 1, k)).
    \]
    That the sets are cliques is also part of Theorem~\ref{thm:classification}. To see that this is a covering, let $S \in \calP_k([N])$ be a vertex. If $N \notin S$, then $S \in T_j$ for some $j$ since the $T_j$ cover $J(N-1, k)$. Since $T_j \subseteq \tilde{T_j}$, then $S$ is covered. If $N \in S$, there exists an $i$ such that $S \setminus \{N\} \in S_i$, so $S \in \tilde{S}_i$.
\end{proof}

\begin{thm}
    \label{thm:recursivebound}
    For any $N$ and $k$ we have
    \[
        \theta(J(N, k)) \leq \binom{N-2}{k-1}.
    \]
\end{thm}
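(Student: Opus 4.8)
The plan is to prove the bound $\theta(J(N,k)) \leq \binom{N-2}{k-1}$ by induction, using Proposition~\ref{prop:recursion} as the recursive step. The key observation is that the right-hand side satisfies exactly Pascal's rule: $\binom{N-2}{k-1} = \binom{N-3}{k-2} + \binom{N-3}{k-1}$, which matches the form of the recursive inequality. So if I can establish the bound for the two ``smaller'' graphs $J(N-1,k-1)$ and $J(N-1,k)$, namely $\theta(J(N-1,k-1)) \leq \binom{N-3}{k-2}$ and $\theta(J(N-1,k)) \leq \binom{N-3}{k-1}$, then Proposition~\ref{prop:recursion} immediately gives
\[
    \theta(J(N,k)) \leq \binom{N-3}{k-2} + \binom{N-3}{k-1} = \binom{N-2}{k-1}.
\]

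First I would set up an induction on $N$ (or equivalently on $N+k$), with the statement to be proven being the inequality for all valid $k$ simultaneously at each level. The main work is in pinning down the base cases and boundary cases carefully, since Proposition~\ref{prop:recursion} is only stated for $N > 2$ and $k > 1$, and the Johnson graph $J(N,k)$ is only defined for $0 < k < N$. I would treat $k = 1$ separately: here $\theta(J(N,1)) = 1 = \binom{N-2}{0}$ by the earlier proposition, so the bound holds with equality for every $N$. The other boundary to handle is $k = N - 1$, where $J(N, N-1) \cong J(N, 1)$ is complete, giving $\theta = 1 = \binom{N-2}{N-2}$, again matching. These two boundary families anchor the induction.

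For the inductive step with $2 \leq k \leq N - 2$, I apply Proposition~\ref{prop:recursion} to reduce to $J(N-1, k-1)$ and $J(N-1, k)$. I need to check that these invocations stay within the ranges where the inductive hypothesis applies: $J(N-1,k-1)$ requires $0 < k-1 < N-1$, i.e. $k \geq 2$ and $k \leq N-1$, which holds; and $J(N-1,k)$ requires $0 < k < N-1$, i.e. $k \leq N-2$, which also holds in this range. Thus both reduced graphs are well-defined and their covering numbers are bounded by the induction hypothesis, and Pascal's rule closes the step.

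The step I expect to be the main obstacle is the careful bookkeeping at the edges of the parameter region rather than any substantive inequality, since the combinatorial heart (Pascal's rule) is exact. In particular, one must verify that the degenerate cases built into Theorem~\ref{thm:classification} and the domain restrictions in Proposition~\ref{prop:recursion} do not create gaps in the inductive scaffolding; the cleanest route is to organize the induction on $N$ with the two boundary families $k = 1$ and $k = N-1$ verified directly at every level, so that the genuinely recursive interior cases $2 \leq k \leq N-2$ always reduce to strictly smaller, in-range instances.
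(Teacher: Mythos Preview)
Your proposal is correct and follows essentially the same route as the paper: induction on $N$, invoking Proposition~\ref{prop:recursion} for the step and Pascal's rule to recombine the two terms. If anything, you are more careful than the paper about the boundary cases $k=1$ and $k=N-1$, which the paper's short proof leaves implicit.
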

\begin{proof}
    The claim holds for $(N, k) = (2, 1)$. Let $N > 2$, let $k \in \bbN$, $0 < k < N$ and assume that
    \[
        \theta(J(M, l)) \leq \binom{M-2}{l-1}
    \]
    for $M < N$ and all $l$, $1 \leq l \leq M$. Then by Proposition~\ref{prop:recursion},
    \[
        \theta(J(N, k)) \leq \theta(J(N - 1, k - 1)) + \theta(J(N -1, k)) \leq \binom{N - 3}{k - 2} + \binom{N - 3}{k - 1} = \binom{N - 2}{k - 1}.
    \]
\end{proof}
Note that Proposition~\ref{prop:recursion} also provides a recursive construction of a clique cover of $J(N, k)$ of cardinality $\binom{N-2}{k-1}$.

As one may read from Table~\ref{tab:exact}, the lowest value of $N$ for which the bound fails to be tight is $N = 7$, at $k = 3$. Here, the construction in Proposition~\ref{prop:recursion} gives the cardinality $10$ clique cover
\begin{gather*}
\left\{A_{\{1,2\}}, A_{\{1,3\}},A_{\{1,4\}},A_{\{2,3\}},A_{\{2,4\}},A_{\{3,4\}}, B_{\{1,5,6,7\}},B_{\{2,5,6,7\}},B_{\{3,5,6,7\}},B_{\{4,5,6,7\}}\right\}.
\end{gather*}
Note that removing any clique from this collection will result in a collection that no longer covers $J(7, 3)$. An example of a cardinality $9$ minimal clique cover is the one obtained from the construction in Theorem~\ref{thm:JN3},
\begin{gather*}
\left\{A_{\{1,2\}},A_{\{1,3\}},A_{\{1,4\}},A_{\{2,3\}},A_{\{2,4\}},A_{\{3,4\}},A_{\{5,6\}},A_{\{5,7\}},A_{\{6,7\}}\right\}.
\end{gather*}

\section{Bounds from coding theory}
\label{sec:coding-theory}
Next up, we will see how we may draw on known results on constant-weight codes. Let 
\[
    \omega(J(N,k)) = \max(N - k + 1, k + 1)
\]
denote the clique number of $J(N, k)$, i.e. the size of a maximum clique, and let $C_k = \tfrac{1}{k+1} \binom{2k}{k}$ denote the $k$'th Catalan number. A simple lower bound on $\theta(J(N, k))$ is obtained by noting that we need at least as many cliques as we would we had a cover by disjoint maximum cliques:
\begin{prop}
    \label{prop:simple-lower-bound}
    For all $N$ and $k$ we have
    \[
        \theta(J(N, k)) \geq \frac{1}{\omega(J(N, k))} \binom{N}{k}.
    \]
    In particular,
    \[
        \theta(J(2k, k)) \geq C_k.
    \]
\end{prop}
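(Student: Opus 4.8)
The plan is to prove the general lower bound first and then specialize. The key observation is a counting inequality: any clique cover $\{G_i = (V_i, E_i)\}$ of $J(N,k)$ must satisfy $\sum_i \lvert V_i \rvert \geq \lvert V(J(N,k)) \rvert = \binom{N}{k}$, since every vertex lies in at least one clique of the cover. Each clique $V_i$ has cardinality at most the clique number $\omega(J(N,k))$, so if the cover consists of $\theta(J(N,k))$ cliques we obtain $\theta(J(N,k)) \cdot \omega(J(N,k)) \geq \sum_i \lvert V_i \rvert \geq \binom{N}{k}$. Rearranging gives the stated bound $\theta(J(N,k)) \geq \binom{N}{k} / \omega(J(N,k))$. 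This is the standard ``fractional'' or volume bound for covering, and it requires no structural input beyond the value of the clique number.

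To make the first inequality rigorous I would simply note that the maps $V_i \hookrightarrow V$ need not be disjoint, but the union bound $\lvert \bigcup_i V_i \rvert \leq \sum_i \lvert V_i \rvert$ together with $\bigcup_i V_i = V$ yields exactly what is needed; equality in the chain forces a partition of $V$ into cliques each of maximum size, which is why the bound is phrased as what one would get ``had a cover by disjoint maximum cliques.'' The value $\omega(J(N,k)) = \max(N-k+1, k+1)$ is already recorded, so I can quote it directly: a type-$A$ clique $A_S^N$ has $N-k+1$ vertices and a type-$B$ clique $B_S^N$ has $k+1$ vertices, and by Theorem~\ref{thm:classification} every maximal clique is one of these two types.

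For the specialization to $J(2k,k)$, I would substitute $N = 2k$ into both the binomial coefficient and the clique number. Here $\omega(J(2k,k)) = \max(2k - k + 1, k+1) = \max(k+1, k+1) = k+1$, reflecting the fact that at the ``balanced'' point the two clique types have equal size. The bound then reads
\[
    \theta(J(2k, k)) \geq \frac{1}{k+1}\binom{2k}{k} = C_k,
\]
which is precisely the $k$-th Catalan number by its definition $C_k = \tfrac{1}{k+1}\binom{2k}{k}$ given in the text.

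I do not anticipate any genuine obstacle here: the argument is a one-line counting bound followed by an arithmetic substitution. The only point requiring a moment's care is confirming that $\max(k+1, k+1) = k+1$ so that the Catalan-number form emerges cleanly, and flagging that the general bound is typically far from tight (equality would require a perfect partition of the vertex set into maximum cliques, which rarely exists). The real interest of the statement, as the surrounding text signals, is that this elementary lower bound turns out to be \emph{achieved} for certain small $k$, so the proof itself should be kept short and the emphasis placed on the sharpness question handled later.
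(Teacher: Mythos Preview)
Your proposal is correct and matches the paper's own approach: the paper does not even supply a formal proof environment for this proposition, stating only that ``we need at least as many cliques as we would if we had a cover by disjoint maximum cliques,'' which is exactly the union-bound counting argument you spell out. Your explicit verification that $\omega(J(2k,k)) = k+1$ and the resulting Catalan-number identity is precisely the intended specialization.
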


Only in rare cases is this bound tight. For the values shown in Table~\ref{tab:smallsolutions}, we find that the bound is realized only when $k = 1$, $k = N - 1$, $N \leq 5$, or when $(N, k)$ is either $(8, 4)$ or $(12, 6)$.

For any graph $G$, we have $\theta(G) \geq \alpha(G)$, where $\alpha(G)$ is the independence number of $G$, i.e. the cardinality of a largest independent set in $G$. An independent set in $J(N, k)$ is known as a binary constant-weight code of length $N$, weight $k$, and Hamming distance at least $4$. The maximum number of elements, codewords, in such a code is denoted $A(N, 4, k) = \alpha(J(N, k))$ and has been studied to a great extent. For instance, we may note that by comparing the values in Table~\ref{tab:smallsolutions} to the values of $A(N, 4, k)$ provided in \cite{ostergard2010}, the only non-trivial cases of where $\theta(J(N, k)) = \alpha(J(N, k))$ that we know of are those where $(N, k)$ is either $(8, 4)$ or $(12, 6)$.

In fact, in these cases, the codes offer a construction of corresponding collections of disjoint maximal cliques, associating to each codeword a maximal clique, so that when the resulting collection covers $V(J(N, k))$, it is necessarily a minimal cover: Let $j \in [N]$ and define for a codeword $S \in V(J(N, k))$ the clique
\[
    C_S^j = \begin{cases}
        A_{S \setminus \{ j \}}^N & \text{if $j \in S$,} \\
        B_{S \cup \{j \}}^N & \text {otherwise.}
    \end{cases}
\]

\begin{prop}
    \label{prop:indep-set-cover}
    For an independent set $\mathcal{W} \subseteq V(J(N, k))$, the maximal cliques $\{ C_S^j \}_{S \in \mathcal{W}}$ are pairwise disjoint. In particular, if
    \[
        \alpha(J(N, k)) = \frac{1}{\omega(J(N, k))} \binom{N}{k},
    \]
    and $\lvert C_S^j \rvert = \omega(J(N, k))$ for all $S \in \mathcal{W}$, then the collection is a minimal cover and $\alpha(J(N, k)) = \theta(J(N, k))$.
\end{prop}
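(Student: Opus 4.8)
The plan is to first extract a clean combinatorial description of which vertices lie in $C_S^j$, and then to read off both claims from it. I would phrase the description in terms of symmetric differences: for a fixed $j$ and a vertex $S$, a vertex $T \neq S$ lies in $C_S^j$ if and only if $S$ and $T$ are adjacent in $J(N,k)$ and $j \in S \triangle T$. To verify this I would split into the two cases of the definition. If $j \in S$, then $C_S^j = A_{S \setminus \{j\}}^N$ consists of the sets $(S \setminus \{j\}) \cup \{y\}$ with $y \notin S \setminus \{j\}$; such a $T$ other than $S$ itself satisfies $S \triangle T = \{j, y\}$, and conversely an adjacent $T$ with $j \in S \triangle T$ must have this form, since $j \in S \setminus T$ forces the other symmetric-difference element into $T \setminus S$. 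The case $j \notin S$, where $C_S^j = B_{S \cup \{j\}}^N$, is entirely analogous with the roles of addition and deletion swapped. Note that each $C_S^j$ contains $S$ itself and is a maximal clique by Theorem~\ref{thm:classification}.

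Granting this description, disjointness follows quickly. Suppose some $T$ lies in $C_S^j \cap C_{S'}^j$ for distinct $S, S' \in \mathcal{W}$. If $T = S$, then $S \in C_{S'}^j$ with $S \neq S'$ forces $S \sim S'$, contradicting that $\mathcal{W}$ is independent; the case $T = S'$ is symmetric. Otherwise $T \neq S, S'$, so the description gives $S \triangle T = \{j, p\}$ and $S' \triangle T = \{j, p'\}$ for some $p, p' \neq j$. Using associativity of the symmetric difference, $S \triangle S' = (S \triangle T) \triangle (S' \triangle T) = \{j, p\} \triangle \{j, p'\}$, in which the two copies of $j$ cancel. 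If $p = p'$ this set is empty, so $S = S'$, a contradiction; if $p \neq p'$ it equals $\{p, p'\}$, so $\lvert S \triangle S' \rvert = 2$, i.e. $S \sim S'$, again contradicting independence. Hence the cliques are pairwise disjoint.

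For the second claim I would simply count. Disjointness together with the fact that each $C_S^j$ contains $S$ shows the cliques are distinct, so the collection has $\lvert \mathcal{W} \rvert$ members; I take $\mathcal{W}$ to be a maximum independent set, of size $\alpha(J(N,k)) = \binom{N}{k}/\omega(J(N,k))$. Under the hypothesis that every $\lvert C_S^j \rvert$ equals $\omega(J(N,k))$, the disjoint cliques cover exactly $\lvert \mathcal{W} \rvert \cdot \omega(J(N,k)) = \binom{N}{k} = \lvert V(J(N,k)) \rvert$ vertices, so they partition the vertex set and in particular form a clique cover. Its cardinality meets the lower bound of Proposition~\ref{prop:simple-lower-bound}, so it is minimal and $\theta(J(N,k)) = \alpha(J(N,k))$.

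The only genuinely substantive step is the symmetric-difference description of $C_S^j$; once it is in place, the independence hypothesis is exactly what is needed to exclude every way two of the cliques could meet, and the final counting is routine. The main thing to be careful about is tracking which of $j$ and the second flipped coordinate lies in $S$ versus $T$ in the two cases of the definition, since that is precisely what makes the converse direction of the characterization go through.
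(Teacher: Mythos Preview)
Your proof is correct and follows essentially the same idea as the paper: both arguments show that a common vertex of $C_{S_1}^j$ and $C_{S_2}^j$ forces $\lvert S_1 \triangle S_2 \rvert \leq 2$, contradicting independence. The paper carries this out by a three-way case split on whether $j$ lies in $S_1$, in $S_2$, or in both, while your symmetric-difference characterization of $C_S^j$ unifies those cases into a single computation; the counting step at the end is the same in both.
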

\begin{proof}
    Let $S_1, S_2 \in \mathcal{W}$. We will consider three different cases. Suppose first that $j \in S_1 \cap S_2$, and let $S \in C_{S_1}^j \cap C_{S_2}^j = A_{S_1 \setminus \{ j \}}^N \cap A_{S_2 \setminus \{ j \}}^N$. Then there exist $x \in [N] \setminus (S_1 \setminus \{j\})$ and $y \in [N] \setminus (S_2 \setminus \{j\})$ so that
    \[
        S = (S_1 \setminus \{j\}) \cup \{x\} = (S_2 \setminus \{j\}) \cup \{y\}. 
    \]
    It follows that $\lvert S_1 \cap S_2 \rvert \geq k - 1$, so since $\mathcal{W}$ was an independent set, $S_1 = S_2$.

    Now suppose that $j \in S_1$, $j \not\in S_2$, and let $S \in C_{S_1}^j \cap C_{S_2}^j$. Then there exist $x \in [N] \setminus (S_1 \setminus \{j\})$ and $y \in S_2 \cup \{j\}$ so that
    \[
        S = (S_1 \setminus \{j\}) \cup \{x\} = (S_2 \cup \{j\}) \setminus \{y\},
    \]
    and as above, $\lvert S_1 \cap S_2 \rvert \geq k - 1$, so $S_1 = S_2$, which contradicts that $j \in S_1 \setminus S_2$.

    Lastly, suppose that $j \not\in S_1$, $j \not\in S_2$, so that
    \[
        S = (S_1 \cup \{j\}) \setminus \{x\} = (S_2 \cup \{j\}) \setminus \{y\}
    \]
    for some $x \in S_1 \cup \{j\}$, $y \in S_2 \cup \{j\}$. Once more, this is only possible if $\lvert S_1 \cap S_2 \rvert \geq k - 1$, so $S_1 = S_2$.
\end{proof}
\begin{cor}
    If $\alpha(J(2k, k)) = C_k$, then $\theta(J(2k, k)) = C_k$.
\end{cor}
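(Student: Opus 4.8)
The plan is to apply Proposition~\ref{prop:indep-set-cover} to the balanced case $N = 2k$, for which the corollary is essentially a direct specialization. First I would compute the relevant quantities: since $\omega(J(2k, k)) = \max(2k - k + 1, k + 1) = k + 1$, the lower-bound fraction becomes $\frac{1}{\omega(J(2k, k))}\binom{2k}{k} = \frac{1}{k+1}\binom{2k}{k} = C_k$. Thus the hypothesis $\alpha(J(2k, k)) = C_k$ is exactly the hypothesis $\alpha(J(N, k)) = \frac{1}{\omega(J(N, k))}\binom{N}{k}$ appearing in Proposition~\ref{prop:indep-set-cover}.

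The one remaining hypothesis of that proposition is the clique-size condition $\lvert C_S^j \rvert = \omega(J(2k, k)) = k + 1$. I would verify this by a direct count, for an arbitrary fixed $j$ and arbitrary $S$ in an independent set $\mathcal{W}$ realizing $\alpha(J(2k, k)) = C_k$. When $j \in S$ we have $C_S^j = A_{S \setminus \{j\}}^{2k}$, whose cardinality is $\lvert [2k] \setminus (S \setminus \{j\}) \rvert = 2k - (k - 1) = k + 1$; when $j \notin S$ we have $C_S^j = B_{S \cup \{j\}}^{2k}$, whose cardinality is $\lvert S \cup \{j\} \rvert = k + 1$. In either case the clique has exactly $\omega(J(2k, k))$ vertices, as required.

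With both hypotheses in place, Proposition~\ref{prop:indep-set-cover} gives that $\{C_S^j\}_{S \in \mathcal{W}}$ is a minimal clique cover and that $\alpha(J(2k, k)) = \theta(J(2k, k))$; combined with the assumption this yields $\theta(J(2k, k)) = C_k$.

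There is no real obstacle here, since the corollary simply packages Propositions~\ref{prop:simple-lower-bound} and~\ref{prop:indep-set-cover} for the specific balanced case $N = 2k$; the only point deserving attention is confirming that the clique-size hypothesis holds automatically at $N = 2k$. Indeed, it is precisely the balancedness $N = 2k$ that forces both the type-A and type-B cliques produced by the construction to attain the maximum size $k + 1$. One might additionally remark on the degenerate case $k = 1$, where $N = 2$ and the classification of Theorem~\ref{thm:classification} collapses, but there $C_1 = 1$ and the identity $\theta(J(2, 1)) = 1$ is immediate.
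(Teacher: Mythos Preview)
Your proposal is correct and follows exactly the intended route: the paper states this corollary without proof, as an immediate consequence of Proposition~\ref{prop:indep-set-cover} (together with Proposition~\ref{prop:simple-lower-bound}), and you have simply spelled out the verification that at $N=2k$ the clique-size hypothesis is automatically satisfied. Your observation that balancedness is precisely what makes both $A$-type and $B$-type cliques attain size $k+1$, and your side remark on the degenerate case $k=1$, are apt and add nothing the paper would disagree with.
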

Since $\alpha(J(2k, k)) = C_k$ for $k = 4, 6$, this gives us a different way of seeing that $\theta(J(8, 4)) = 14$ and $\theta(J(12, 6)) = 132$. Note that those values were already captured in Table~\ref{tab:smallsolutions}.

\begin{prop}
    If $\alpha(J(N, k)) = \theta(J(N, k))$, then $J(N, k)$ has a minimal clique cover by disjoint maximal cliques.
\end{prop}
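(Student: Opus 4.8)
The plan is to use the equality $\theta(J(N,k)) = \alpha(J(N,k))$ (the inequality $\theta \ge \alpha$ being recorded just before Proposition~\ref{prop:indep-set-cover}) to manufacture a minimal cover whose members are simultaneously pairwise disjoint and maximal, in two stages: first secure disjointness, then upgrade to maximality. Throughout I would exploit that a maximum independent set and a minimum clique cover have the same size, so they are forced into a rigid bijective relationship.

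First I would produce a disjoint minimal cover and pin down its interaction with a maximum independent set. Writing $t = \alpha(J(N,k)) = \theta(J(N,k))$, fix any minimal clique cover $\{Q_1, \dots, Q_t\}$ and replace it by the disjoint cliques $Q_i' = Q_i \setminus \bigcup_{l<i} Q_l$; each $Q_i'$ is still a clique, the union still covers $V(J(N,k))$, and no $Q_i'$ can be empty without contradicting $\theta = t$, so $\{Q_1', \dots, Q_t'\}$ partitions $V(J(N,k))$ into $t$ cliques. Fixing a maximum independent set $I$ with $|I| = t$, each part meets $I$ in at most one vertex, and since the $t$ parts must cover the $t$ vertices of $I$, a pigeonhole count forces exactly one vertex of $I$ per part; relabel so that $v_i \in Q_i' \cap I$.

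Next I would upgrade this partition to one by maximal cliques through a local exchange argument. If some $Q_i'$ is not maximal, there is a vertex $w \notin Q_i'$ adjacent to all of $Q_i'$; say $w \in Q_l'$ with $l \ne i$. The vertex $w$ cannot be the representative $v_l$, since $w$ is adjacent to $v_i \in Q_i'$ while $v_i, v_l$ are distinct vertices of the independent set $I$; hence moving $w$ into $Q_i'$ leaves $Q_i' \cup \{w\}$ a clique, leaves $Q_l' \setminus \{w\} \ni v_l$ a nonempty clique, and preserves both the partition and its representatives. Equivalently, one can try to realize the desired maximal cliques directly as the $C_{v_i}^{j}$ of Proposition~\ref{prop:indep-set-cover} for a fixed $j$, which are automatically pairwise disjoint; then covering is equivalent to the size identity $\sum_i |C_{v_i}^{j}| = \binom{N}{k}$.

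The hard part will be guaranteeing that this terminates in an all-maximal partition rather than merely shuffling non-maximality between parts: an absorption that makes $Q_i'$ maximal may strip a donor part below maximality. I expect this step to require the Johnson-specific structure of Theorem~\ref{thm:classification} — because two distinct maximal cliques of $J(N,k)$ overlap only in a tightly controlled way, one can bound how absorptions propagate, or invoke the disjointness already furnished by Proposition~\ref{prop:indep-set-cover} to reduce covering to the size identity above. Verifying that identity — that is, that the hypothesis $\alpha = \theta$ forces the maximum independent set to induce a perfect packing of $V(J(N,k))$ by maximal cliques (which for $N = 2k$ is immediate, since every maximal clique then has size $\omega = k+1$ and $t = C_k = \binom{N}{k}/\omega$ by Proposition~\ref{prop:simple-lower-bound}) — is the crux on which the whole argument turns.
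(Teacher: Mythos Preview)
Your proposal correctly isolates the crux but does not close it: to finish either of your two routes you need the numerical fact $\theta(J(N,k)) = \binom{N}{k}/\omega(J(N,k))$, and you only establish this when $N = 2k$. The exchange argument has exactly the termination problem you flag, and the alternative via the size identity $\sum_i |C_{v_i}^j| = \binom{N}{k}$ again comes down to that same equality (for $N\neq 2k$ the cliques $C_{v_i}^j$ do not all have size $\omega$, so the identity is not automatic).

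The missing ingredient, and essentially the whole of the paper's proof, is the clique--coclique bound for vertex-transitive graphs: for any vertex-transitive $G$ one has $\alpha(G)\,\omega(G) \le |V(G)|$. Since Johnson graphs are vertex-transitive, this yields $\alpha(J(N,k)) \le \binom{N}{k}/\omega(J(N,k))$. Together with Proposition~\ref{prop:simple-lower-bound} and the hypothesis $\alpha = \theta$, this squeezes $\theta(J(N,k)) = \binom{N}{k}/\omega(J(N,k))$. Now take \emph{any} minimum clique cover $\{Q_1,\dots,Q_t\}$: one has
\[
\binom{N}{k} \;=\; \Bigl|\bigcup_i Q_i\Bigr| \;\le\; \sum_i |Q_i| \;\le\; t\,\omega(J(N,k)) \;=\; \binom{N}{k},
\]
so equality holds throughout, forcing the $Q_i$ to be pairwise disjoint and each of size $\omega(J(N,k))$, hence maximum and in particular maximal. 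No exchange procedure is needed.
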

\begin{proof}
    The clique-coclique bound for vertex-transitive graphs, cf. e.g. \cite[Cor.~2.1.2]{erdos-ko-rado-book}, implies that
    \begin{align}
        \label{eq:clique-coclique}
        \alpha(J(N, k)) \leq \frac{1}{\omega(J(N, k))} \binom{N}{k}.
    \end{align}
\end{proof}

As mentioned above, the only non-trivial cases we know of for which $\alpha(J(N, k)) = \theta(J(N, k))$ are $N = 8$, $k = 4$ and $N = 12$, $k = 6$. One might speculate if this pattern continues for other cases of $N = 2k$, where $k$ is even, but it follows from the Johnson bounds \cite{Zeger2000-cm, johnsonbound} that e.g. $\alpha(J(16, 8)) \leq 1280 < 1430 = C_8$. The proposition below allows us to rule out a further collection of cases.

\begin{prop}
    \label{prop:alpha-prime}
    Assume that $\alpha(J(2k, k)) = C_k$. Then $k + 1$ is prime.
\end{prop}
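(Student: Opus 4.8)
The plan is to exploit the equality case of the clique--coclique bound already used in the previous proposition; we may assume $k \geq 2$, the case $k = 1$ being trivial since then $k + 1 = 2$. Write $n = k+1 = \omega(J(2k,k))$. Since $\binom{2k}{k} = (k+1)C_k = \omega(J(2k,k)) \cdot C_k$, the hypothesis $\alpha(J(2k,k)) = C_k$ says exactly that the bound \eqref{eq:clique-coclique} holds with equality. For a vertex-transitive graph, equality in the clique--coclique bound forces every maximum clique to meet every maximum independent set in exactly one vertex (cf.\ the equality analysis of the clique--coclique bound in \cite{erdos-ko-rado-book}). Fix a maximum independent set $\mathcal{W}$ with $\lvert \mathcal{W} \rvert = C_k$. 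Every type-A clique $A_S^{2k}$ with $S \in \calP_{k-1}([2k])$ has exactly $2k - (k-1) = k+1 = \omega(J(2k,k))$ vertices and is therefore a maximum clique, so each $A_S^{2k}$ meets $\mathcal{W}$ in exactly one vertex. Unwinding the definition of $A_S^{2k}$, this says precisely that every $(k-1)$-subset of $[2k]$ is contained in exactly one block of $\mathcal{W}$; that is, $\mathcal{W}$ is a Steiner system $S(k-1, k, 2k)$, equivalently a $(k-1)$-$(2k, k, 1)$ design. (The type-B cliques give the complementary statement and are not needed.)

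Next I would invoke the standard divisibility conditions for $t$-designs. In any $(k-1)$-$(2k,k,1)$ design, the number of blocks containing a fixed $i$-subset, for $0 \leq i \leq k-1$, is the positive integer
\[
    \lambda_i = \frac{\binom{2k-i}{k-1-i}}{\binom{k-i}{k-1-i}} = \frac{1}{k+1}\binom{2k-i}{k-i},
\]
where the second equality follows from a short factorial manipulation using $\binom{k-i}{k-1-i} = k-i$. Reindexing by $j = k-i$, the integrality of all the $\lambda_i$ is exactly the statement that $k+1$ divides $\binom{k+j}{j}$ for every $j$ with $1 \leq j \leq k$. It therefore suffices to prove the elementary claim that this whole family of divisibilities holds if and only if $k+1$ is prime; for the proposition only the forward implication, that $k+1$ composite forces some $\lambda_i \notin \bbZ$, is required, and this contradicts the existence of the design.

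The heart of the argument, and the step I expect to be the most delicate, is this number-theoretic lemma, which I would settle with Kummer's theorem. Writing $n = k+1$, Kummer expresses $v_p\binom{k+j}{j}$ as the number of carries when adding $j$ and $k = n-1$ in base $p$. If $n$ is prime, then for $1 \leq j \leq n-1$ the product $(k+1)(k+2)\cdots(k+j) = n(n+1)\cdots(n+j-1)$ contains the factor $n$ exactly once while $j!$ is coprime to $n$, so $n \mid \binom{k+j}{j}$ and all $\lambda_i$ are integers. If instead $n$ is composite, I would exhibit a single bad $j$: choosing a prime $p \mid n$ with $a = v_p(n)$ and $m = n/p^a$, one takes $j = p^a$ when $m \geq 2$ and $j = p$ when $n = p^a$ is a proper prime power. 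Since $n-1$ has its lowest $a$ base-$p$ digits equal to $p-1$, a direct carry count gives $v_p\binom{k+j}{j} = 0$ in the first case and $a-1$ in the second, both strictly below $a = v_p(n)$; hence $n \nmid \binom{k+j}{j}$, the corresponding $\lambda_i$ fails to be an integer, and no such design can exist. The main obstacle is organizing this carry analysis cleanly across the two cases and verifying that the chosen $j$ lies in the range $1 \leq j \leq k$, but no input beyond Kummer's theorem is needed.
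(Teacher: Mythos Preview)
Your argument is correct but follows a genuinely different route from the paper. The paper iterates the first Johnson bound $\alpha(J(N,k)) \le \lfloor \tfrac{N}{k}\,\alpha(J(N-1,k-1))\rfloor$ down to $\alpha(J(k+2,2)) = \lfloor\tfrac{k+2}{2}\rfloor$, obtaining a nested-floor upper bound whose unfloored value is exactly $C_k$; the hypothesis $\alpha(J(2k,k)) = C_k$ then forces every floor to be vacuous, which unwinds to the same family of divisibilities $(k+1)\mid\binom{k+j}{j}$ for $2 \le j \le k$ that you reach. You instead use that equality in the clique--coclique bound makes every maximum clique meet a fixed maximum independent set in exactly one vertex, so that set is a $(2k,k,k-1)$ Steiner system---essentially the $N = 2k$ case of Theorem~\ref{thm:steiner-codes}, which the paper quotes later---and then read off the divisibilities as the standard necessary conditions for the design. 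Your version is more self-contained in that it supplies a full proof, via Kummer's theorem, of the number-theoretic step ``these divisibilities force $k+1$ prime,'' which the paper simply asserts; the paper's version has the mild advantage of not needing the equality analysis of clique--coclique, relying only on the Johnson bound as an inequality.
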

\begin{proof}
    In general, the Johnson bounds tell us that
    \begin{align*}
        \alpha(J(N, k)) \leq \left\lfloor \frac{N}{k} \alpha(J(N-1, k-1)) \right\rfloor, \\
        \alpha(J(N, k)) \leq \left\lfloor \frac{N}{N-k} \alpha(J(N-1, k)) \right\rfloor.
    \end{align*}
    for $k \in \{1, \dots, N - 1\}$. It is known that $\alpha(J(n, 2)) = \lfloor\tfrac{n}{2} \rfloor$ for all $n > 1$, so by applying the first Johnson bound $k - 2$ times, we find that
    \begin{align}
        \label{eq:johnson-recursion}
        \alpha(J(2k, k)) \leq \left\lfloor \frac{2k}{k} \left\lfloor \frac{2k-1}{k-1} \left\lfloor \frac{2k-2}{k-2} \cdots \left\lfloor \frac{2k-(k-3)}{k-(k-3)} \left\lfloor \frac{2k-(k-2)}{k-(k-2)}\right\rfloor\right\rfloor\right\rfloor\right\rfloor\right\rfloor.
    \end{align}
    On the other hand, a well-known recursion for the Catalan numbers tell us that
    \[
        C_k = \frac{2k}{k} \frac{2k-1}{k-1} \frac{2k-2}{k-2}\cdots \frac{2k-(k-3)}{k-(k-3)}\frac{2k-(k-2)}{k-(k-2)}.
    \]
    In particular, $\alpha(J(2k, k)) = C_k$ if and only if we never round down in \eqref{eq:johnson-recursion}, which happens exactly when $k + 1$ is prime.
\end{proof}
\begin{example}
    Note than other known values of $A(2k, 4, k)$ allow us to further rule out the possibility that $\alpha(J(2k, k)) = C_k$. For example, from \cite[Table~I]{Zeger2000-cm}, we read that $\alpha(J(20, 10)) \leq 16652 < 16796 = C_{10}$ and $\alpha(J(24, 12)) \leq 207078 < 208012 = C_{12}$, which lets us rule out $k = 10$ and $k = 12$, but all other values of $k$ that are one less than a prime are left open.
\end{example}

We can also note that inequality~\eqref{eq:clique-coclique} being tight is not by itself sufficient to guarantee that we have $\alpha(J(N, k)) = \theta(J(N, k))$: For example, $\alpha(J(6, 2)) = 3$ but $\theta(J(6, 2)) = 4$.

But we may ask if the converse holds; does a collection of disjoint maximum cliques define a code such that each clique contains a codeword? This would then allow us to conclude, for instance, that if $\theta(J(2k, k)) = C_k$, then $C_k$ is prime. Below we will answer this in the negative in general, but note that in some cases, this is possible: One simple case occurs when there is a $j \in [N]$ such that $j$ does not belong to any clique of type A, but it does belong to every clique of type B. Then, the construction in Proposition~\ref{prop:indep-set-cover} can simply be reversed.

\begin{example}
    \label{ex:8-4-cover}
    For $N = 8$, $k = 4$, we find, with computer assistance, this is in fact all that can happen. Recall that $\theta(J(8, 4)) = 14$. Solving an appropriate ILP, one finds that any minimal clique cover must contain exactly $7$ cliques of type A and $7$ cliques of type B.
    
    For a given clique cover $\mathcal{C}$, let $\mathcal{A}(\mathcal{C}) = \{ S \mid A_S \in \mathcal{C}\}$ and $\mathcal{B}(\mathcal{C}) = \{ S \mid B_S \in \mathcal{C}\}$ be the sets underlying the cliques of type A and B respectively. By solving another ILP, one finds that any clique cover $\mathcal{C}$ for which $\bigcup_{S \in \mathcal{A}(\mathcal{C})} S = [8]$ has cardinality at least $15$ and similarly for cliques of type B. This means that for any minimal clique cover, one can find a $j \in [8]$ such the set
    \[
        \{ S \cup \{j\} \mid S \in \mathcal{A}(\mathcal{C})\} \cup \{ S \setminus \{j\} \mid S \in \mathcal{B}(\mathcal{C})\}
    \]
    is a constant-weight code with distance $4$ and with $14$ codewords.
\end{example}
\begin{example}
    For $N = 12$, $k = 6$, more work is required: The example in Table~\ref{tab:smallsolutions} shows that there is no way to get a code by simply adding/removing a single specific element from cliques as one could for $N = 8$, $k = 4$. However, the second simplest strategy still works for that particular example: For a clique $A_S$ of type A, if $10 \notin S$, then use $S \cup \{ 10 \}$. Otherwise, use $S \cup \{9 \}$. For a clique $B_S$, if $10 \in S$, use $S \setminus \{ 10\}$, otherwise use $S \setminus \{9\}$; the resulting subset of $\calP_6([12])$ is a code with distance $4$ and $132$ codewords.

    As above, let $\mathcal{C}$ denote this particular clique cover, and let $\mathcal{A}(\mathcal{C})$ denote the sets generating the cliques of type A. What makes the elements $9$ and $10$ special in that example is that
    \[
        \lvert \{ S \in \mathcal{A}(\mathcal{C}) \mid j \in S \} \rvert = \begin{cases} 15, & \text{if $j\in \{9, 10\},$} \\ 30, & \text{if $j \notin \{9, 10\}$.} \end{cases}
    \]
    One might then ask if things can get more complicated than this. As in Example~\ref{ex:8-4-cover}, by solving an appropriate ILP one finds that any minimal clique cover of $J(12, 6)$ contains exactly $66$ cliques of type A, and $66$ of type B, and we can ask how the underlying elements of $[12]$ can be distributed in the generating sets in, say, $\mathcal{A}(\mathcal{C})$, and it turns out that there are only two cases: In the simple case, corresponding to the existence of an element $j' \in [12]$ such that $j' \notin \bigcup_{S \in \mathcal{A}(\mathcal{C})} S$, we have
    \[
        a_j = \lvert \{ S \in \mathcal{A}(\mathcal{C}) \mid j \in S \} \rvert = \begin{cases} 0, & \text{if $j = j'$,} \\ 30, & \text{if $j \not= j'$,} \end{cases}
    \]
    and otherwise there exist $j_1$ and $j_2$ such that
    
    \[
        a_j = \lvert \{ S \in \mathcal{A}(\mathcal{C}) \mid j \in S \} \rvert = \begin{cases} 15, & \text{if $j\in \{j_1, j_2\}$,} \\ 30, & \text{if $j \notin \{j_1, j_2\}$,} \end{cases}
    \]
    as in the concrete example in Table~\ref{tab:smallsolutions}. To see this one solves the general set cover ILP with the additional constraint that $a_i \leq 29$, $i = 1, 2, 3$, and find that the smallest possible cover has strictly more than $132$ cliques; in fact, we find that the minimal such cover has at least $133$ and at most $135$ cliques. Similarly, one solves the original ILP with the constraint that $1 \leq a_1 \leq 14$ and find that the minimal cover has exactly $133$ elements. This means that a minimal clique cover of $J(12, 6)$ has at most two elements $j_1, j_2 \in [12]$ that appear in generators of less than $30$ cliques of type A, and since appearing in between $1$ and $14$ cliques is ruled out, the only possible options are that those two elements appear in either $0$ and $30$ cliques, or in $15$ and $15$ cliques.
\end{example}
To see that we can not in general convert a collection of disjoint maximum cliques to a corresponding code, it is convenient to rephrase the question as that of finding maximum independent sets in certain graphs: Let $JK(N, k)$ denote the graph whose vertices $V(JK(N,k)) = V(J(N, k)) \times \{0, 1\}$ correspond to two copies of the Johnson graph, and for which two vertices $(S, i)$ and $(T, j)$ are connected by an edge if either $i = j$ and $(S, T) \in E(J(N, k))$, or if $i \not= j$ and $S \cap T = \emptyset$. In other words, this is the result of adding to each part of the bipartite Kneser graph the edges from the Johnson graphs.

\begin{lem}
    Collections of disjoint maximum cliques in $J(2k, k)$ correspond to independent sets in the graph $JK(2k, k - 1)$.
\end{lem}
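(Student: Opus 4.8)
The plan is to exhibit an explicit bijection between the maximum cliques of $J(2k, k)$ and the vertices of $JK(2k, k-1)$ under which disjointness of cliques corresponds exactly to non-adjacency of vertices. A collection of pairwise disjoint maximum cliques then maps to a pairwise non-adjacent set of vertices, which is precisely an independent set, and conversely.

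First I would record that in $J(2k, k)$ (with $k \geq 2$, so that $N = 2k > 2$ and the edge-case conditions of Theorem~\ref{thm:classification} are met) every maximal clique is in fact a maximum clique: each is either a type A clique $A_S$ with $S \in \calP_{k-1}([2k])$ or a type B clique $B_T$ with $T \in \calP_{k+1}([2k])$, and both have cardinality $k + 1 = \omega(J(2k, k))$. Since complementation $T \mapsto T^c = [2k] \setminus T$ is a bijection $\calP_{k+1}([2k]) \to \calP_{k-1}([2k])$, I define $\Phi$ on maximum cliques by $\Phi(A_S) = (S, 0)$ and $\Phi(B_T) = (T^c, 1)$. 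Because distinct indices yield distinct cliques and the two families are disjoint (Theorem~\ref{thm:classification}), $\Phi$ is a bijection onto $V(JK(2k, k-1)) = \calP_{k-1}([2k]) \times \{0, 1\}$, sending type A cliques to the copy indexed by $0$ and type B cliques to the copy indexed by $1$.

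The heart of the argument is to check, case by case, that two distinct maximum cliques are disjoint if and only if their images are non-adjacent. For two type A cliques, $A_{S_1} \cap A_{S_2} \neq \emptyset$ exactly when some $k$-set contains both $S_1$ and $S_2$, which for $S_1 \neq S_2$ happens iff $\lvert S_1 \cap S_2 \rvert = k - 2$, i.e. iff $(S_1, 0)$ and $(S_2, 0)$ are adjacent in the Johnson copy indexed by $0$. For two type B cliques, $B_{T_1} \cap B_{T_2} \neq \emptyset$ iff some $k$-set lies in both $T_1$ and $T_2$, i.e. iff $\lvert T_1 \cap T_2 \rvert = k$; translating through complements via $\lvert T_1 \cap T_2 \rvert = 2 + \lvert T_1^c \cap T_2^c \rvert$, this is iff $\lvert T_1^c \cap T_2^c \rvert = k - 2$, i.e. iff $(T_1^c, 1)$ and $(T_2^c, 1)$ are adjacent in the Johnson copy indexed by $1$. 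Finally, for a type A and a type B clique, $A_S \cap B_T \neq \emptyset$ iff some $k$-set $U$ satisfies $S \subseteq U \subseteq T$, which is possible iff $S \subseteq T$, equivalently $S \cap T^c = \emptyset$; this is precisely the cross-copy adjacency condition relating $(S, 0)$ and $(T^c, 1)$ in $JK(2k, k-1)$.

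Putting these three equivalences together, $\Phi$ carries pairwise disjoint collections of maximum cliques bijectively onto independent sets of $JK(2k, k-1)$, which proves the claim. I expect the only delicate part to be the bookkeeping in the three intersection computations, particularly translating the type B condition through complementation so that it lines up with the same-copy Johnson adjacency in $JK$; but each reduces to an elementary count of how a $k$-set can be sandwiched between, or can contain or be contained in, the sets indexing the two cliques.
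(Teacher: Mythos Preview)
Your proof is correct and follows essentially the same approach as the paper: you set up the identical correspondence $A_S \mapsto (S,0)$, $B_T \mapsto (T^c,1)$ and verify in each of the three cases that non-disjointness of cliques matches adjacency in $JK(2k,k-1)$. The paper's own proof is considerably terser, merely asserting the three equivalences without the intersection counts, so your version supplies the details the paper omits.
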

\begin{proof}
    The correspondence maps a clique $A_S$ of type A to $(S, 0)$ and a clique $B_S$ of type B to $(S^c, 1)$. Under this correspondence, two cliques of type A (resp. B) share an edge in $JK(2k, k-1)$ if and only if the cliques are not disjoint. Similarly, two cliques $A_S$ and $B_T$ intersect if and only if $S \subseteq T$, or equivalently, $S \cap T^c = \emptyset$.
\end{proof}
In other words, the largest collection of maximum disjoint cliques has cardinality $\alpha(JK(2k, k-1))$. On the other hand, the map $V(J(2k, k)) \to V(JK(2k-1, k-1))$ given by
\[
    S \mapsto \begin{cases} (S \setminus \{2k \}, 0), & \text{if $2k \in S$,} \\ (S^c \setminus \{2k\}, 1),  & \text {if $2k \notin S$,} \end{cases}
\]
is a graph isomorphism, so in particular, $\alpha(J(2k, k)) = \alpha(JK(2k-1, k-1)$. From this perspective, Proposition~\ref{prop:indep-set-cover} tells that $\alpha(JK(2k - 1, k-1)) \leq \alpha(JK(2k, k-1))$, and the question is when we have equality. Together with the results from \cite{Zeger2000-cm}, solving the maximum independent set problem in $JK(2k-1, k-1)$ as an ILP, we get the following result:
\begin{prop}
    For $k \leq 6$, we have $\alpha(JK(2k - 1, k-1)) = \alpha(JK(2k, k-1))$, but for $k = 7$, we have $\alpha(JK(2k-1, k-1))\leq 342$ and $\alpha(JK(2k, k-1)) \geq 371$.
\end{prop}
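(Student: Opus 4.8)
The plan is to reduce both sides to quantities that are either tabulated in the coding-theory literature or accessible by integer linear programming, and then to read off the (in)equalities numerically. The left-hand side is the easy one to identify: the graph isomorphism established just above gives $\alpha(JK(2k-1, k-1)) = \alpha(J(2k, k)) = A(2k, 4, k)$, and the values $A(2k, 4, k)$ for $k \leq 7$ are known, e.g. from \cite{ostergard2010, Zeger2000-cm}. In particular $A(8,4,4) = 14$, $A(10,4,5) = 36$, and $A(12,4,6) = 132$, while for $k = 7$ the bound $A(14, 4, 7) \leq 342$ is available from \cite{Zeger2000-cm}. The right-hand side $\alpha(JK(2k, k-1))$ is, by the preceding lemma, the maximum number of pairwise disjoint maximum cliques in $J(2k,k)$, which I would compute (or bound) directly by solving the maximum independent set problem on $JK(2k, k-1)$ as an ILP.

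For $k \leq 6$ the graphs $JK(2k, k-1)$ are small enough to solve to optimality: $JK(12, 5)$ has only $2\binom{12}{5} = 1584$ vertices and the smaller cases are smaller still. Running the ILP yields $\alpha(JK(2k, k-1)) = A(2k, 4, k)$ for each such $k$. Since Proposition~\ref{prop:indep-set-cover} already supplies the inequality $\alpha(JK(2k-1, k-1)) \leq \alpha(JK(2k, k-1))$, this matching ILP value furnishes the reverse inequality and hence the claimed equality.

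For $k = 7$ the two sides separate, and here only one-sided bounds are needed rather than exact optima. The upper bound $\alpha(JK(13, 6)) = A(14, 4, 7) \leq 342$ is read off the coding-theory tables directly. For the lower bound it suffices to exhibit a single independent set of size $371$ in $JK(14, 6)$ — equivalently, a collection of $371$ pairwise disjoint maximum cliques in $J(14, 7)$ — since any such collection produced by the solver as a feasible (not necessarily optimal) solution certifies $\alpha(JK(14, 6)) \geq 371$. As $342 < 371$, the two quantities cannot be equal, which is exactly the failure of equality for $k = 7$.

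The main obstacle is computational rather than conceptual: $JK(14, 6)$ has $2\binom{14}{6} = 6006$ vertices and correspondingly many adjacency constraints, so solving its maximum independent set ILP to optimality is expensive. The key point is that optimality is not required for the separation — the upper bound comes from the literature and the lower bound only needs one feasible solution of size $371$ — so the argument reduces to trusting the published value of $A(14,4,7)$ together with the solver's ability to find a sufficiently large independent set, and it is this search step that I expect to dominate the running time in the $k = 7$ case.
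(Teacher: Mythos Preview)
Your proposal is correct and takes essentially the same approach as the paper: use the isomorphism $JK(2k-1,k-1)\cong J(2k,k)$ to identify $\alpha(JK(2k-1,k-1))=A(2k,4,k)$, read off the values and the bound $A(14,4,7)\leq 342$ from the constant-weight code tables in \cite{Zeger2000-cm}, and handle $\alpha(JK(2k,k-1))$ by ILP, solving to optimality for $k\leq 6$ and exhibiting a feasible independent set of size $371$ for $k=7$ (the paper points to its code repository for this witness). Your write-up is in fact more explicit than the paper's one-line proof about which graph the ILP must be run on and why only a feasible solution, not an optimum, is needed at $k=7$.
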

\begin{rem}
    An example of an independent set in $JK(14, 6)$ of cardinality $371$ may be found using the source code in the repository supplementing this work.
\end{rem}
This implies that we should also not expect that $\alpha(J(2k, k)) = C_k$ would imply that $\theta(J(2k, k)) = C_k$, but on the other hand, clearly if $\alpha(JK(2k, k-1)) = C_k$, then $\theta(J(2k, k)) = C_k$.

One simple algorithm for finding independent sets in a graph $\alpha(G)$ is the greedy algorithm which takes an ordering $v_1, \dots, v_{\lvert V(G)\rvert}$ of $V(G)$, runs through the vertices in that order, and for a given vertex, adds it to the independent set if none of the already added elements are neighbors of that vertex.

For the case of $JK(2k, k-1)$, one way to order the vertices is to first take all vertices in one half, $J(2k, k-1) \times \{0\}$, in lexicographic order, then take all elements in the other half, $J(2k, k-1) \times \{1\}$, in lexicographic order. By running the algorithm, we then find that for $k = 2, 4, 8, 16$, this construction is optimal: it produces an independent set of cardinality $C_k$. The resulting independent sets have the property that $(S, 0)$ belongs to the set if and only if $(S, 1)$ does. This means that an even simpler construction works for these cases: Order the vertices of $J(2k, k-1)$ lexicographically, then use the greedy algorithm to construct an independent set $\mathcal{W}$ in $J(2k, k-1)$. Then, check if $S_1 \cap S_2 \not= \emptyset$ for all $S_1, S_2 \in \mathcal{W}$. If this holds, $\{(S, 0) \mid S \in \mathcal{W}\} \cup \{(S, 1) \mid S \in \mathcal{W}\}$ is an independent set in $JK(2k, k-1)$. Then, if $\lvert \mathcal{W} \rvert = \tfrac{1}{2} C_k$, we have $\alpha(JK(2k, k-1)) = 2 \lvert \mathcal{W} \rvert = C_k$, and $\theta(J(2k, k)) = C_k$. In other words, since this construction is optimal for $k = 8$ and $k = 16$, this gives a computational proof of the following:
\begin{thm}
    \label{thm:theta-16-8-32-16}
    We have $\theta(J(16, 8)) = C_8 = 1430$ and $\theta(J(32, 16)) = C_{16} = 35357670$.
\end{thm}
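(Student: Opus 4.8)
The plan is to sandwich $\theta(J(2k,k))$ between $C_k$ from both sides for each of the two exponents $k \in \{8, 16\}$. The lower bound $\theta(J(2k,k)) \geq C_k$ is immediate from Proposition~\ref{prop:simple-lower-bound}, so the entire content lies in producing, for each value of $k$, an explicit clique cover of cardinality exactly $C_k$. I would obtain such a cover from a single independent set via the $JK$ machinery developed above.

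For the upper bound I would follow the greedy lexicode route. Order the vertices of $J(2k, k-1)$ lexicographically and run the greedy independent-set algorithm to obtain a constant-weight code $\mathcal{W} \subseteq \calP_{k-1}([2k])$ of minimum distance $4$. I then verify three facts by computation: that $\lvert\mathcal{W}\rvert = \tfrac{1}{2}C_k$; that $\mathcal{W}$ is pairwise intersecting, i.e. $S_1 \cap S_2 \neq \emptyset$ for all $S_1, S_2 \in \mathcal{W}$; and — automatic from the greedy construction — that $\mathcal{W}$ is an independent set in $J(2k, k-1)$, equivalently a distance-$4$ code. Granting these, the set $\{(S,0) \mid S \in \mathcal{W}\} \cup \{(S,1) \mid S \in \mathcal{W}\}$ is an independent set of cardinality $C_k$ in $JK(2k, k-1)$: the within-part edges are excluded because $\mathcal{W}$ has distance $4$, and the cross edges are excluded precisely by the pairwise-intersection property.

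By the lemma identifying collections of disjoint maximum cliques of $J(2k,k)$ with independent sets in $JK(2k, k-1)$, this independent set yields $C_k$ pairwise disjoint cliques, namely $A_S$ and $B_{S^c}$ for each $S \in \mathcal{W}$, each of the maximum size $\omega(J(2k,k)) = k+1$. Since they are pairwise disjoint and $C_k \cdot (k+1) = \binom{2k}{k} = \lvert V(J(2k,k)) \rvert$, they must exhaust every vertex, so they form a clique cover and hence $\theta(J(2k,k)) \leq C_k$. Combined with the lower bound, this gives equality, and substituting the numerical values $C_8 = 1430$ and $C_{16} = 35357670$ completes the statement.

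The main obstacle is purely computational and concentrated in the case $k = 16$. The ambient graph $J(32, 15)$ has $\binom{32}{15} = 565\,722\,720$ vertices, so the greedy pass cannot naively examine each candidate, and the output code $\mathcal{W}$ has $\tfrac{1}{2}C_{16} = 17\,678\,835$ words, for which a brute-force pairwise-intersection test over the roughly $1.6 \times 10^{14}$ pairs is infeasible. I would therefore exploit the recursive structure of constant-weight lexicodes to generate $\mathcal{W}$ without scanning all of $\calP_{15}([32])$, and I would verify the intersection condition by a complementary search — looking for a disjoint pair using a sorted or hashed index on set-complements and confirming that none exists — rather than enumerating pairs directly. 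Each individual verification is conceptually routine; the difficulty is the engineering required to carry them out at $k = 16$.
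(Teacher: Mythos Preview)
Your proposal is correct and follows essentially the same approach as the paper: the lower bound from Proposition~\ref{prop:simple-lower-bound}, and the upper bound by building the greedy lexicode $\mathcal{W}$ in $J(2k,k-1)$, verifying computationally that $\lvert\mathcal{W}\rvert = \tfrac{1}{2}C_k$ and that $\mathcal{W}$ is pairwise intersecting, then doubling to an independent set of size $C_k$ in $JK(2k,k-1)$ and reading off a cover by disjoint maximum cliques. Your added discussion of the computational obstacles at $k=16$ goes beyond what the paper spells out, but the underlying argument is the same.
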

\begin{rem}
    Since we know from \cite{Zeger2000-cm} that $\alpha(J(16, 8)) \leq 1280$, Theorem~\ref{thm:theta-16-8-32-16} gives an example where $\theta(J(2k, k)) = C_k$ but $\alpha(J(2k, k)) \not= C_k$.
\end{rem}
\begin{rem}
    The independent set in $J(2k, k-1)$ obtained with the above greedy algorithm is known as a constant-weight binary lexicographic code, or lexicode, with minimal distance $4$, so a natural question is therefore whether lexicodes with minimal distance $4$, length $2k$, constant weight $k-1$ always consist of $\tfrac{1}{2} C_k$ codewords when $k > 1$ is a power of $2$. See \cite{Conway1986-jh} for a characterization of these codes in terms of winning positions in Welter's game.
    
    Given this evidence above, we conjecture that this construction works whenever $k$ is a power of $2$.
\end{rem}
\begin{conjecture}
    Assume that $k > 1$ is a power of $2$. The lexicode in $J(2k, k-1)$ has cardinality $\tfrac{1}{2} C_k$, and its elements are pairwise overlapping. In particular, the code defines an independent set of cardinality $C_k$ in $JK(2k, k-1)$ so that $\theta(J(2k, k)) = C_k$.
\end{conjecture}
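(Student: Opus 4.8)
\emph{Proof proposal.} The plan is to reduce the statement to a question about Welter's game, carry out the enumeration there, and then feed the result back through the doubling construction. First I would make the game reduction precise. Writing each $(k-1)$-subset of $\{0,1,\dots,2k-1\}$ as a position of tokens on $2k$ squares, the greedy lexicographic rule accepts a word $S$ exactly when no previously accepted word lies at Hamming distance $2$ below it, i.e. when every single token-move to a lex-smaller admissible word leads to a rejected word. This is precisely the statement that the accepted words are the $\mathcal{P}$-positions of the impartial game in which a move slides one token to a lower empty square, which is Welter's game; see \cite{Conway1986-jh}. Thus $\lvert \mathcal{W} \rvert$ equals the number of $\mathcal{P}$-positions of Welter's game with $k-1$ tokens on the board $\{0,\dots,2k-1\}$, and the two remaining tasks are to count these positions and to show that they pairwise intersect.

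For the count I would invoke Welter's theorem, which expresses the Grundy value of a position $S = \{a_1,\dots,a_{k-1}\}$ as a nim-sum of the token positions together with pairwise correction terms,
\[
    W(S) = \bigoplus_{i} a_i \;\oplus\; \bigoplus_{i<j}\bigl(2^{\,v_2(a_i \oplus a_j)+1}-1\bigr),
\]
where $\oplus$ is nim-addition and $v_2$ is the $2$-adic valuation, so that the $\mathcal{P}$-positions are exactly the zeros of $W$. The goal is to show $\#\{S : W(S) = 0\} = \tfrac12 C_k$ when $k = 2^r$. I would pass to the abacus/partition description, under which $S$ corresponds to a partition inside the $(k-1)\times(k+1)$ box and a token-slide corresponds to removing a rim hook; this recasts $W$ as an invariant of partitions and makes the $2$-adic structure of the board length $2k = 2^{r+1}$ visible. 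The power-of-two hypothesis should enter exactly here: since $v_2(C_{2^r}) = 1$, one expects $\tfrac12 C_{2^r}$ to arise as the number of $W$-zeros through a self-pairing of all non-zero classes, and I would try to set up either a generating-function identity or a period-$2$ involution on the non-$\mathcal{P}$ positions whose only unpaired orbits are the $\mathcal{P}$-positions, forcing the count to collapse to half a Catalan number. Establishing this identity is the step I expect to be the main obstacle, as it is where the special arithmetic of powers of two must be exploited rather than merely observed.

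For the intersection property I would argue from $W$ directly, and here too the hypothesis $k = 2^r$ is essential: already for $k = 3$ the $\mathcal{P}$-positions $\{0,1\},\{2,3\},\{4,5\}$ are pairwise \emph{disjoint}, so overlap cannot be a formal consequence of the code alone. If $S, T \in \mathcal{W}$ were disjoint, then $S \sqcup T$ would occupy $2k - 2$ of the $2k$ squares, leaving exactly two empty squares $\{p,q\}$, and the single-token terms would combine as $\bigoplus_{a \in S} a \oplus \bigoplus_{a \in T} a = \bigoplus_{a=0}^{2k-1} a \oplus p \oplus q = p \oplus q$, using that $\bigoplus_{a=0}^{2k-1} a = 0$ for $k = 2^r$. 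I would then analyse the residual pairwise contributions of $S$ and $T$ against $p \oplus q$ to show that $W(S) = W(T) = 0$ is impossible, ruling out disjoint pairs; I anticipate that controlling the pairwise terms is delicate and may have to be interleaved with the enumeration rather than proved independently.

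Finally I would assemble the pieces through the results already established. Given $\lvert \mathcal{W} \rvert = \tfrac12 C_k$ with $\mathcal{W}$ an independent set of pairwise-overlapping $(k-1)$-sets, the doubled set $\{(S,0)\}_{S \in \mathcal{W}} \cup \{(S,1)\}_{S \in \mathcal{W}}$ has no within-copy edge (independence of $\mathcal{W}$) and no cross edge (overlap, using $k - 1 \geq 1$), hence is an independent set of size $C_k$ in $JK(2k, k-1)$. By the lemma identifying collections of disjoint maximum cliques in $J(2k, k)$ with independent sets in $JK(2k, k-1)$, this produces $C_k$ pairwise disjoint cliques of size $\omega(J(2k,k)) = k+1$, which cover all $\binom{2k}{k} = C_k (k+1)$ vertices and therefore form a clique cover; thus $\theta(J(2k,k)) \leq C_k$, and together with the matching lower bound of Proposition~\ref{prop:simple-lower-bound} we conclude $\theta(J(2k,k)) = C_k$.
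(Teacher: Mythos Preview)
The statement you are attempting is stated in the paper as a \emph{conjecture}; the paper gives no proof, only computational verification for $k \in \{2,4,8,16\}$ together with the remark (citing \cite{Conway1986-jh}) that the lexicode codewords are the winning positions of Welter's game. So there is no ``paper's proof'' to compare against, and any genuine proof would resolve an open problem.

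Your reduction in the first paragraph is sound and is exactly the Conway--Sloane identification the paper points to: with the integer-value ordering on $(k-1)$-subsets of $\{0,\dots,2k-1\}$, a Hamming-distance-$2$ lex-smaller neighbour is precisely a single Welter move, so the greedy lexicode coincides with the set of $\mathcal P$-positions. Your final assembly is also correct: once one has $\lvert\mathcal W\rvert=\tfrac12 C_k$ with pairwise-overlapping elements, the doubled set is independent in $JK(2k,k-1)$, the lemma converts it to $C_k$ disjoint $(k{+}1)$-cliques, these must cover all $\binom{2k}{k}$ vertices, and Proposition~\ref{prop:simple-lower-bound} closes the argument.

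The substantive content, however, lies entirely in the two middle steps, and there your proposal is a sketch rather than a proof. For the count, you invoke Welter's formula and then write that you ``would try to set up'' an identity or involution forcing $\#\{W=0\}=\tfrac12 C_k$; nothing in the proposal actually carries this out, and you rightly flag it as the main obstacle. For the overlap property, your observation that $\bigoplus_{a=0}^{2k-1}a=0$ when $2k=2^{r+1}$ isolates one place where the power-of-two hypothesis enters, and your $k=3$ counterexample shows the hypothesis is genuinely needed; but the argument stops at the single-token nim-sum and never controls the pairwise correction terms $\bigoplus_{i<j}(2^{v_2(a_i\oplus a_j)+1}-1)$, which is where the real difficulty sits. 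As written, then, the proposal is a reasonable outline that correctly identifies the two hard steps but does not advance either of them beyond what the paper already records as open.
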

Summarizing, we know that $\theta(J(2k, k)) = C_k$ for $k = 1, 2, 4, 6, 8, 16$. For other even values of $k$ we do not have explicit values of $\theta(J(2k, k))$, leaving open the possibility that we do have covers by disjoint maximum cliques in those cases too.
\begin{question}
    Is $\theta(J(2k, k)) = C_k$ whenever $k$ is even?
\end{question}

\begin{rem}
The additional symmetry at $N = 2k$ also motivates the following question: Can we find a minimal clique cover with the property that it contains $A_S^N$ if and only if it contains $B_{S^c}^N$ and vice versa? Note that $A_S^N$ and $B_{S^c}^N$ are disjoint when $k > 1$. By the above discussion, this is possible for $k = 2, 4, 8, 16$ and using ILP solvers, we may confirm that finding such minimal clique covers is also possible for $k = 3, 6$, but for $k = 5$, the smallest such cover contains $48$ cliques, more than the $\theta(J(10, 5)) = 46$ recorded earlier.
\end{rem}

\section{Bounds from combinatorial design theory}
\label{sec:design-theory}
In the context of combinatorial design theory, following \cite{handbook}, for $v \geq s \geq t \geq 1$ and $\lambda \geq 1$, a \emph{$t$-$(v, s, \lambda)$ covering} is a pair $(X, \mathcal{B})$ where $X$ is a set with $v$ elements, and $\mathcal{B} \subseteq \calP_s(X)$ is such that every $S \in \calP_t(X)$ is a subset of at least $\lambda$ elements in $\mathcal{B}$. The \emph{covering number} $C_\lambda(v, s, t)$ is the cardinality of the smallest such $\mathcal{B}$.

Let $v \geq m \geq n$. A $(v, m, n)$ Turán system is a pair $(X, \mathcal{B})$ where $X$ has $v$ elements, $\mathcal{B} \subseteq \calP_n(X)$ such that every $S \in \calP_m(X)$ is a superset of at least one element of $\mathcal{B}$. The \emph{Turán number} $T(v, m, n)$ is the cardinality of the smallest such $\mathcal{B}$. In both cases, the elements of $\mathcal{B}$ are referred to as \emph{blocks}, and the two numbers are related through $T(v, m, n) = C_1(v, v - n, v - m)$. 

\begin{prop}
    \label{prop:cov-number-props}
    For all $N$ and $k$, we have
    \[
        \theta(J(N, k)) \leq C_1(N, k + 1, k) = T(N, N - k, N - k - 1)
    \]
    and
    \[
        \theta(J(N, k)) \leq T(N, k, k - 1) = C_1(N, N - k + 1, N - k).
    \]
    If there is a minimal cover of $J(N, k)$ consisting of only maximal cliques of type B (resp. type A), the first (resp. second) inequality is an equality. Conversely, when all minimal covers of $J(N, k)$ are mixed, the inequalities are strict.
\end{prop}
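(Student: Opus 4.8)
The plan is to identify each of the two \emph{pure} cover types with a classical combinatorial structure via the clique classification in Theorem~\ref{thm:classification}, read off the two inequalities from that identification, and then obtain the equality and strictness claims by a short extremality argument.

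First I would make the type-B side precise. A type-B clique $B_S^N$ with $S \in \calP_{k+1}([N])$ has vertex set exactly the $k$-element subsets of $S$, so a vertex $T \in \calP_k([N])$ lies in $B_S^N$ if and only if $T \subseteq S$. Consequently a collection $\{B_{S_i}^N\}$ covers $V(J(N,k))$ precisely when every $T \in \calP_k([N])$ is contained in some $S_i$, i.e. when $\{S_i\}$ is the block set of a $k$-$(N, k+1, 1)$ covering. Since distinct $(k+1)$-sets give distinct maximal cliques, this is a size-preserving bijection, so the smallest type-B cover has cardinality exactly $C_1(N, k+1, k)$. In particular this pure cover witnesses $\theta(J(N,k)) \leq C_1(N, k+1, k)$, and the identity $C_1(N, k+1, k) = T(N, N-k, N-k-1)$ is immediate from the stated relation $T(v,m,n) = C_1(v, v-n, v-m)$.

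The type-A side is dual: a type-A clique $A_S^N$ with $S \in \calP_{k-1}([N])$ contains $T \in \calP_k([N])$ if and only if $S \subseteq T$, so a collection $\{A_{S_i}^N\}$ covers $V(J(N,k))$ exactly when every $k$-set is a superset of some $(k-1)$-block, i.e. when $\{S_i\}$ is an $(N, k, k-1)$ Tur\'an system. The same counting argument shows the smallest type-A cover has cardinality $T(N, k, k-1)$, yielding $\theta(J(N,k)) \leq T(N, k, k-1) = C_1(N, N-k+1, N-k)$.

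Finally I would deduce the equality and strictness claims. Because any clique cover can be replaced by one of maximal cliques of no larger size, a minimal cover may be taken to consist of type-A and type-B cliques only. If some minimal cover is pure type B, then $\theta(J(N,k))$ equals the size of a type-B cover, hence is at least the minimum type-B cover size $C_1(N, k+1, k)$; combined with the inequality above this forces equality, and symmetrically for type A. For the converse, if $\theta(J(N,k)) = C_1(N, k+1, k)$ then the (size-$C_1$) smallest type-B cover is itself a minimal cover that is pure type B, so at least one minimal cover fails to be mixed; taking the contrapositive gives strictness whenever all minimal covers are mixed, and likewise for type A. The argument is otherwise mechanical, so I expect the only real care to be in the edge cases of Theorem~\ref{thm:classification} (e.g. $k = 1$, $k = N-1$, or small $N$), where the clique-to-subset correspondence must be checked by hand; this is the one place where the reasoning could slip.
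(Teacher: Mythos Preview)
Your proposal is correct and follows essentially the same approach as the paper: identify type-B covers with $k$-$(N,k+1,1)$ coverings and type-A covers with $(N,k,k-1)$ Tur\'an systems via Theorem~\ref{thm:classification}, and read off the inequalities. The paper's proof is a two-sentence sketch that states only this identification and leaves the equality and strictness claims implicit; your write-up spells those out carefully, which is an improvement in exposition rather than a different argument.
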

\begin{proof}
    A $k$-$(N, k + 1, 1)$ covering $([N], \mathcal{B})$ defines a clique cover $\{B_S \mid S \in \mathcal{B}\}$ of type B of $J(N, k)$. Similarly, an $(N, k, k-1)$ Turán system defines a clique cover of type A.
\end{proof}
This also allows us to pull clique covers and clique covering number bounds from the extensive literature on combinatorial designs. For explicit values of $C_1(v, s, t)$, see for example \cite{applegate2003, sidorenko1995}.

The extreme case, where a $t$-$(v, s, \lambda)$ covering $(X, \mathcal{B})$ has the property that each subset $S \in \calP_t(X)$ is a subset of \emph{exactly} $\lambda$ elements of $\mathcal{B}$ is called an $(v, s, t, \lambda)$ \emph{design}, and a $(v, s, t, 1)$ design is called a $(v, s, t)$ \emph{Steiner system}. Such systems can only exist under the ``divisibility conditions'' that $\binom{s-i}{t-i}$ divides $\lambda\binom{v-i}{t-i}$ for each $0 \leq i \leq t - 1$. The main result of \cite{keevash2019} is that for fixed $s$, $t$, and $\lambda$, $(v, s, t, \lambda)$ designs exist whenever the divisibility conditions hold for all but finitely many values of $v$.

When a $(v, s, t)$ Steiner system exists, clearly $C_1(v, s, t) = \binom{v}{t} / s$. Together with Proposition~\ref{prop:cov-number-props}, this implies the following results.

\begin{prop}
    Assume that an $(N, k + 1, k)$ Steiner system exists. Then
    \[
        \theta(J(N, k)) \leq \frac{1}{k+1}\binom{N}{k}.
    \]
\end{prop}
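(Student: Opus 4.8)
The plan is to combine Proposition~\ref{prop:cov-number-props} with the elementary count of blocks in a Steiner system. The key observation is that an $(N, k+1, k)$ Steiner system is by definition a $k$-$(N, k+1, 1)$ design: every $S \in \calP_k([N])$ lies in exactly one block of size $k+1$. In particular it is a $k$-$(N, k+1, 1)$ covering, so it witnesses the inequality $C_1(N, k+1, k) \le (\text{number of its blocks})$, which is precisely the object feeding into the first inequality of Proposition~\ref{prop:cov-number-props}.

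It then remains to count the blocks and to check that the Steiner system is in fact an \emph{optimal} covering, so that the bound holds with equality. I would do this by the standard double-counting of incident pairs $(B, S)$ with $B$ a block and $S \subseteq B$ a $k$-subset. Each of the $\binom{N}{k}$ subsets $S \in \calP_k([N])$ is contained in exactly one block, while each block of size $k+1$ contains exactly $\binom{k+1}{k} = k + 1$ such subsets. Hence the number of blocks is $\binom{N}{k}/(k+1)$. The same count also furnishes a matching lower bound: any $k$-$(N, k+1, 1)$ covering must cover all $\binom{N}{k}$ subsets, and each block covers at most $k+1$ of them, so no covering can use fewer than $\binom{N}{k}/(k+1)$ blocks. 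Therefore $C_1(N, k+1, k) = \frac{1}{k+1}\binom{N}{k}$, exactly as recorded in the remark preceding the statement.

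Finally, feeding this value into the first inequality of Proposition~\ref{prop:cov-number-props} yields
\[
    \theta(J(N, k)) \le C_1(N, k+1, k) = \frac{1}{k+1}\binom{N}{k},
\]
which is the claim. There is no real obstacle here: the substance lives in Proposition~\ref{prop:cov-number-props} (which turns a type-B covering into a clique cover of $J(N,k)$) together with the fact that a Steiner system realizes the covering number. The only point deserving a moment's care is the identity $\binom{k+1}{k} = k+1$, which is exactly why the block count collapses to $\binom{N}{k}/(k+1)$ rather than the general $\binom{N}{k}/\binom{k+1}{k}$.
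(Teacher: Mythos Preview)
Your proof is correct and follows the same approach as the paper: invoke Proposition~\ref{prop:cov-number-props} for the inequality $\theta(J(N,k)) \le C_1(N,k+1,k)$, then use the block count of the Steiner system to evaluate the covering number. The paper is terser---it simply records that a Steiner system forces $C_1(v,s,t) = \binom{v}{t}/s$ and states the proposition as an immediate consequence---while you spell out the double-counting argument; but the substance is identical.
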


\begin{prop}
    Let $k \geq 1$ be fixed. There exists an $N_0 > k$ such that the following holds for all $N \geq N_0$: Assume that $k + 1 - i$ divides $\binom{N-i}{k-i}$ for all $0 \leq i \leq k - 1$. Then
    \[
        \theta(J(N, k)) \leq \frac{1}{k+1}\binom{N}{k}.
    \]
\end{prop}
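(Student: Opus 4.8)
The plan is to recognize that the stated divisibility hypothesis is precisely the divisibility condition governing the existence of $(N, k+1, k)$ Steiner systems, and then to appeal to the existence result of \cite{keevash2019} to guarantee such a system for all sufficiently large admissible $N$, at which point the preceding proposition yields the desired bound.

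First I would translate the general divisibility conditions into the present setting. Recall that a $(v, s, t)$ Steiner system is exactly a $(v, s, t, 1)$ design, and that such a design can only exist when $\binom{s-i}{t-i}$ divides $\lambda \binom{v-i}{t-i}$ for each $0 \leq i \leq t - 1$. Specializing to $s = k+1$, $t = k$, $v = N$, and $\lambda = 1$, I compute
\[
    \binom{(k+1)-i}{k-i} = \binom{k+1-i}{1} = k + 1 - i,
\]
so the divisibility conditions become exactly the hypothesis that $k + 1 - i$ divides $\binom{N-i}{k-i}$ for every $0 \leq i \leq k - 1$.

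Next I would invoke the main result of \cite{keevash2019}. With $s = k+1$, $t = k$, and $\lambda = 1$ all fixed (as $k$ is fixed), that theorem asserts that $(N, k+1, k, 1)$ designs exist for all but finitely many values of $N$ for which the divisibility conditions hold. Letting $F$ denote this finite set of exceptional values, I would set $N_0 = \max(\{k\} \cup F) + 1$. Then for every $N \geq N_0$ satisfying the divisibility hypothesis, the value $N$ lies outside $F$, so an $(N, k+1, k)$ Steiner system exists, and the preceding proposition gives $\theta(J(N, k)) \leq \tfrac{1}{k+1}\binom{N}{k}$.

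The only real content lies in correctly identifying the specialized divisibility conditions with the hypothesis and in extracting the threshold $N_0$ from the ``all but finitely many'' clause of \cite{keevash2019}; once those are in place, the argument is purely an assembly of the cited existence theorem with the Steiner-system bound established just above. I do not expect a genuine obstacle, since the hard existence statement is exactly what is imported from \cite{keevash2019}; the mild subtlety is simply that $N_0$ is not effective, as \cite{keevash2019} does not bound the exceptional set $F$.
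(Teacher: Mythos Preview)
Your proposal is correct and matches the paper's approach exactly: the paper does not even spell out a separate proof, simply noting that the two propositions follow from Proposition~\ref{prop:cov-number-props} together with the Keevash existence theorem once one observes that the stated divisibility hypothesis is precisely the specialization of the general divisibility conditions to $(v,s,t,\lambda)=(N,k+1,k,1)$. Your write-up just makes those identifications explicit.
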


\begin{rem}
One could also hope to extract exact values for $\theta(J(N, k))$ from the \cite{keevash2019} result rather than only upper bounds, but this is a taller ask: A Steiner system defines a clique cover of type B, and indeed a minimal clique cover among all clique covers of type B; however, even when a Steiner system exists, we might still be able to achieve smaller mixed clique covers. One way to rule out the existence of smaller mixed clique covers would be if those would be unable to cover all of $V(J(N, k))$. Suppose that an $(N, k+1, k)$ Steiner system exists; it then follows that $\theta(J(N, k)) = C_1(N, k+1, k) = \tfrac{1}{k+1}\binom{N}{k}$ as long as
\[
    \left(\frac{1}{k+1}\binom{N}{k} - 1\right) \omega(J(N, k)) < \binom{N}{k}.
\]
This restriction, however, implies that $k = N - 1$ which in turn just tells us that $\theta(J(N, N - 1)) = 1$ which we already knew.

As an example, take $N = 12$, $k = 5$. The bound in Theorem~\ref{thm:recursivebound} implies that $\theta(J(12, 5)) \leq 210$, but it is known that a $(12, 6, 5)$ Steiner system exists, which implies that $\theta(J(12, 5)) \leq C_1(12, 6, 5) = 132$. But from Table~\ref{tab:smallsolutions}, we read that $\theta(J(12, 5)) \leq 115$.
\end{rem}
Nevertheless, Steiner systems can still be used to define minimal clique covers. The following theorem is well-known (see \cite{schonberg} and \cite[Thm.~7]{Brouwer1990-px}):
\begin{thm}
    \label{thm:steiner-codes}
    We have
    \[
        \alpha(J(N, k)) = \frac{N(N-1) \cdots (N-k+2)}{k(k-1) \cdots 2}
    \]
    if and only if an $(N, k, k-1)$ Steiner system exists, and in that case, the blocks of the Steiner system are an independent set in $J(N, k)$.
\end{thm}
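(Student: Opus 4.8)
The plan is to prove both directions at once via a single double-counting (packing) estimate, after first rewriting the right-hand side in a more suggestive form. A direct computation shows that
\[
    \frac{N(N-1) \cdots (N-k+2)}{k(k-1) \cdots 2} = \frac{1}{k}\binom{N}{k-1},
\]
which is exactly the number of blocks in an $(N, k, k-1)$ Steiner system, since each of the $\binom{N}{k-1}$ many $(k-1)$-subsets of $[N]$ lies in exactly one block, and each block contains $\binom{k}{k-1} = k$ such subsets. This already signals that Steiner systems should be precisely the extremal independent sets.

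First I would record the combinatorial reformulation of independence. Two distinct vertices $S_1, S_2 \in \calP_k([N])$ are adjacent in $J(N, k)$ if and only if $\lvert S_1 \cap S_2 \rvert = k - 1$, which in turn holds if and only if $S_1$ and $S_2$ contain a common $(k-1)$-subset. Consequently, a set $\mathcal{W} \subseteq \calP_k([N])$ is independent if and only if no $(k-1)$-subset of $[N]$ is contained in two distinct members of $\mathcal{W}$; equivalently, every $(k-1)$-subset lies in \emph{at most} one member of $\mathcal{W}$.

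Next I would run the counting argument. Counting incidences between members of an independent set $\mathcal{W}$ and the $(k-1)$-subsets they contain, each member contributes exactly $k$ of them, and by the reformulation above each $(k-1)$-subset is counted at most once, so
\[
    k\lvert \mathcal{W}\rvert \leq \binom{N}{k-1},
\]
giving the unconditional bound $\alpha(J(N, k)) \leq \tfrac{1}{k}\binom{N}{k-1}$. Equality forces every $(k-1)$-subset to be contained in \emph{exactly} one member of $\mathcal{W}$, i.e. $\mathcal{W}$ is an $(N, k, k-1)$ Steiner system; conversely, the blocks of such a system form an independent set of size $\tfrac{1}{k}\binom{N}{k-1}$ (two distinct blocks sharing a $(k-1)$-subset would place that subset in two blocks, violating the $\lambda = 1$ condition, so the blocks are pairwise non-adjacent) and hence attain the bound. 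This simultaneously establishes the claimed value of $\alpha$, the equivalence with the existence of a Steiner system, and the final assertion that the blocks form an independent set.

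I do not expect a serious obstacle here: the argument is an elementary Fisher-type packing count. The only points requiring care are the algebraic identification of the stated fraction with $\tfrac{1}{k}\binom{N}{k-1}$, and making the equality analysis airtight in both directions -- specifically, verifying that equality in the incidence count does not merely bound $\lvert \mathcal{W}\rvert$ but forces the exact-covering property defining a Steiner system.
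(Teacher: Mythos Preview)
Your argument is correct and complete: the reformulation of independence in $J(N,k)$ as ``no $(k-1)$-subset lies in two members'' is accurate, the double count $k\lvert\mathcal{W}\rvert \leq \binom{N}{k-1}$ is valid, and the equality analysis in both directions is airtight. The algebraic identification of the stated fraction with $\tfrac{1}{k}\binom{N}{k-1}$ is also right.

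As for the comparison: the paper does not actually prove this theorem. It is stated as well-known and deferred to the literature (Sch\"onberg; Brouwer et al.), so there is no in-paper argument to compare against. Your proof is the standard packing/Fisher-type count that underlies those references, so you have supplied precisely the argument the paper omits.
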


\begin{cor}
    Assume that a $(2k, k, k-1)$ Steiner system exists. Then we have $\theta(J(2k, k)) = C_k$.
\end{cor}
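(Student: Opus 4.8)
The plan is to combine the characterization of the independence number via Steiner systems (Theorem~\ref{thm:steiner-codes}) with the earlier corollary following Proposition~\ref{prop:indep-set-cover}, which states that $\alpha(J(2k, k)) = C_k$ already forces $\theta(J(2k, k)) = C_k$. So the entire task reduces to verifying that the hypothesis of that corollary is met, i.e. that the existence of a $(2k, k, k-1)$ Steiner system pins the independence number at exactly the $k$-th Catalan number.

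First I would specialize the formula in Theorem~\ref{thm:steiner-codes} to $N = 2k$. The existence of a $(2k, k, k-1)$ Steiner system gives
\[
    \alpha(J(2k, k)) = \frac{(2k)(2k-1) \cdots (k+2)}{k(k-1) \cdots 2}.
\]
The numerator is the product of the $k-1$ consecutive integers from $k+2$ up to $2k$, so it equals $(2k)! / (k+1)!$, while the denominator equals $k!$. Hence $\alpha(J(2k, k)) = (2k)! / ((k+1)!\,k!)$, which I would then recognize as $C_k = \tfrac{1}{k+1}\binom{2k}{k} = (2k)!/((k+1)!\,k!)$. This short arithmetic identity is the only genuine computation in the argument, and it is exactly the reason $N = 2k$ is the relevant case. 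With $\alpha(J(2k, k)) = C_k$ in hand, the claim $\theta(J(2k, k)) = C_k$ is immediate from the cited corollary.

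The main ``obstacle'' here is merely getting the index bookkeeping in the falling-factorial product right; there is no deep difficulty, as Theorem~\ref{thm:steiner-codes} and the intervening corollary carry all the weight. If one preferred a self-contained argument bypassing the intermediate corollary, I would instead apply Proposition~\ref{prop:indep-set-cover} directly: the blocks of the Steiner system form an independent set $\mathcal{W}$ with $\lvert \mathcal{W} \rvert = C_k$; the clique number is $\omega(J(2k, k)) = \max(k+1, k+1) = k+1$, so that $\tfrac{1}{\omega}\binom{2k}{k} = C_k = \alpha(J(2k, k))$; and every clique $C_S^j$ has cardinality $\omega$, since both the type-A clique $A_{S \setminus \{j\}}^{2k}$ and the type-B clique $B_{S \cup \{j\}}^{2k}$ have size $2k - (k-1) = k+1$. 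The hypotheses of Proposition~\ref{prop:indep-set-cover} are thus satisfied, and it yields that $\{C_S^j\}_{S \in \mathcal{W}}$ is a minimal cover of cardinality $C_k$, completing the proof.
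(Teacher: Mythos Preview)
Your proposal is correct and follows essentially the same approach as the paper: both arguments combine Theorem~\ref{thm:steiner-codes} with Proposition~\ref{prop:indep-set-cover} (the paper invokes the proposition directly, while your primary route goes through the corollary immediately following it, and your alternative route is virtually identical to the paper's proof). The arithmetic identification $\alpha(J(2k,k)) = (2k)!/((k+1)!\,k!) = C_k$ and the verification that all cliques $C_S^j$ have size $k+1$ are exactly the checks the paper performs.
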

\begin{proof}
    By Theorem~\ref{thm:steiner-codes} and the construction in Proposition~\ref{prop:indep-set-cover}, we have a collection of disjoint maximal cliques consisting of
    \[
        \frac{N(N-1) \cdots (N-k+2)}{k(k-1) \cdots 2} = \frac{1}{\omega(J(N, k))} \binom{N}{k}
    \]
    cliques. Since all maximal cliques have the same cardinality $\omega(J(2k, k)) = k + 1$, this is a cover of $V(J(2k, k))$.
\end{proof}


\section*{Software availability}
The software used to support the computer-assisted proofs is available at\\\url{https://github.com/Kvantify/johnson-clique-cover/}.

\bibliographystyle{is-alpha}
\bibliography{references}
    \begin{table}
        \centering
        \adjustbox{max width=0.93\textwidth}{
            \begin{tblr}{r|r|X[j,valign=m]}
                $N$ & $k$ & Minimal clique cover\\
                \hline
                8 & 4 & $A^{8}_{\{{1, 2, 8}\}}$, $A^{8}_{\{{1, 3, 7}\}}$, $A^{8}_{\{{1, 4, 5}\}}$, $A^{8}_{\{{2, 3, 5}\}}$, $A^{8}_{\{{2, 4, 7}\}}$, $A^{8}_{\{{3, 4, 8}\}}$, $A^{8}_{\{{5, 7, 8}\}}$, $B^{8}_{\{{1, 2, 3, 4, 6}\}}$, $B^{8}_{\{{1, 2, 5, 6, 7}\}}$, $B^{8}_{\{{1, 3, 5, 6, 8}\}}$, $B^{8}_{\{{1, 4, 6, 7, 8}\}}$, $B^{8}_{\{{2, 3, 6, 7, 8}\}}$, $B^{8}_{\{{2, 4, 5, 6, 8}\}}$, $B^{8}_{\{{3, 4, 5, 6, 7}\}}$ \\
                \hline
                9 & 4 & $A^{9}_{\{{1, 2, 4}\}}$, $A^{9}_{\{{1, 9, 3}\}}$, $A^{9}_{\{{1, 5, 7}\}}$, $A^{9}_{\{{1, 6, 7}\}}$, $A^{9}_{\{{2, 3, 7}\}}$, $A^{9}_{\{{2, 3, 8}\}}$, $A^{9}_{\{{2, 4, 5}\}}$, $A^{9}_{\{{9, 3, 5}\}}$, $A^{9}_{\{{9, 4, 7}\}}$, $A^{9}_{\{{9, 4, 8}\}}$, $A^{9}_{\{{5, 6, 8}\}}$, $A^{9}_{\{{5, 7, 8}\}}$, $B^{9}_{\{{1, 2, 3, 5, 6}\}}$, $B^{9}_{\{{1, 2, 5, 8, 9}\}}$, $B^{9}_{\{{1, 2, 6, 8, 9}\}}$, $B^{9}_{\{{1, 2, 7, 8, 9}\}}$, $B^{9}_{\{{1, 3, 4, 5, 8}\}}$, $B^{9}_{\{{1, 3, 4, 6, 8}\}}$, $B^{9}_{\{{1, 3, 4, 7, 8}\}}$, $B^{9}_{\{{1, 4, 5, 6, 9}\}}$, $B^{9}_{\{{2, 3, 4, 6, 9}\}}$, $B^{9}_{\{{2, 4, 6, 7, 8}\}}$, $B^{9}_{\{{2, 5, 6, 7, 9}\}}$, $B^{9}_{\{{3, 4, 5, 6, 7}\}}$, $B^{9}_{\{{3, 6, 7, 8, 9}\}}$\\
                \hline
                10 & 4 & $A^{10}_{\{{1, 2, 7}\}}$, $A^{10}_{\{{1, 2, 10}\}}$, $A^{10}_{\{{1, 3, 7}\}}$, $A^{10}_{\{{1, 3, 8}\}}$, $A^{10}_{\{{1, 10, 3}\}}$, $A^{10}_{\{{1, 4, 5}\}}$, $A^{10}_{\{{1, 4, 6}\}}$, $A^{10}_{\{{1, 9, 4}\}}$, $A^{10}_{\{{1, 5, 6}\}}$, $A^{10}_{\{{1, 9, 6}\}}$, $A^{10}_{\{{1, 7, 8}\}}$, $A^{10}_{\{{1, 10, 8}\}}$, $A^{10}_{\{{2, 3, 4}\}}$, $A^{10}_{\{{2, 3, 6}\}}$, $A^{10}_{\{{2, 4, 8}\}}$, $A^{10}_{\{{2, 5, 10}\}}$, $A^{10}_{\{{2, 6, 8}\}}$, $A^{10}_{\{{9, 2, 7}\}}$, $A^{10}_{\{{3, 4, 6}\}}$, $A^{10}_{\{{3, 4, 8}\}}$, $A^{10}_{\{{3, 5, 7}\}}$, $A^{10}_{\{{9, 10, 3}\}}$, $A^{10}_{\{{9, 4, 5}\}}$, $A^{10}_{\{{10, 4, 5}\}}$, $A^{10}_{\{{4, 6, 8}\}}$, $A^{10}_{\{{9, 4, 7}\}}$, $A^{10}_{\{{10, 4, 7}\}}$, $A^{10}_{\{{9, 5, 6}\}}$, $A^{10}_{\{{5, 7, 8}\}}$, $A^{10}_{\{{10, 6, 7}\}}$, $A^{10}_{\{{9, 10, 8}\}}$, $B^{10}_{\{{1, 2, 3, 5, 9}\}}$, $B^{10}_{\{{1, 2, 5, 8, 9}\}}$, $B^{10}_{\{{1, 5, 7, 9, 10}\}}$, $B^{10}_{\{{2, 3, 5, 8, 9}\}}$, $B^{10}_{\{{2, 3, 7, 8, 10}\}}$, $B^{10}_{\{{2, 4, 5, 6, 7}\}}$, $B^{10}_{\{{2, 4, 6, 9, 10}\}}$, $B^{10}_{\{{3, 5, 6, 8, 10}\}}$, $B^{10}_{\{{3, 6, 7, 8, 9}\}}$ \\
                \hline
                10 & 5 & $A^{10}_{\{{1, 2, 3, 4}\}}$, $A^{10}_{\{{1, 2, 5, 7}\}}$, $A^{10}_{\{{1, 3, 5, 8}\}}$, $A^{10}_{\{{1, 3, 6, 7}\}}$, $A^{10}_{\{{1, 10, 3, 6}\}}$, $A^{10}_{\{{1, 9, 3, 7}\}}$, $A^{10}_{\{{1, 4, 7, 8}\}}$, $A^{10}_{\{{1, 10, 6, 7}\}}$, $A^{10}_{\{{2, 3, 5, 6}\}}$, $A^{10}_{\{{2, 4, 5, 8}\}}$, $A^{10}_{\{{2, 4, 5, 10}\}}$, $A^{10}_{\{{2, 4, 6, 7}\}}$, $A^{10}_{\{{9, 2, 4, 8}\}}$, $A^{10}_{\{{9, 2, 5, 8}\}}$, $A^{10}_{\{{3, 4, 6, 8}\}}$, $A^{10}_{\{{9, 3, 6, 7}\}}$, $A^{10}_{\{{10, 4, 5, 8}\}}$, $A^{10}_{\{{5, 6, 7, 8}\}}$, $B^{10}_{\{{1, 2, 3, 5, 9, 10}\}}$, $B^{10}_{\{{1, 2, 3, 6, 8, 9}\}}$, $B^{10}_{\{{1, 2, 3, 7, 8, 10}\}}$, $B^{10}_{\{{1, 2, 4, 5, 6, 9}\}}$, $B^{10}_{\{{1, 2, 4, 6, 8, 10}\}}$, $B^{10}_{\{{1, 2, 4, 7, 9, 10}\}}$, $B^{10}_{\{{1, 2, 5, 6, 8, 10}\}}$, $B^{10}_{\{{1, 2, 6, 7, 8, 9}\}}$, $B^{10}_{\{{1, 2, 6, 8, 9, 10}\}}$, $B^{10}_{\{{1, 3, 4, 5, 6, 9}\}}$, $B^{10}_{\{{1, 3, 4, 5, 7, 10}\}}$, $B^{10}_{\{{1, 3, 4, 8, 9, 10}\}}$, $B^{10}_{\{{1, 4, 5, 6, 7, 9}\}}$, $B^{10}_{\{{1, 4, 5, 6, 8, 9}\}}$, $B^{10}_{\{{1, 4, 5, 6, 9, 10}\}}$, $B^{10}_{\{{1, 5, 7, 8, 9, 10}\}}$, $B^{10}_{\{{2, 3, 4, 5, 7, 9}\}}$, $B^{10}_{\{{2, 3, 4, 6, 9, 10}\}}$, $B^{10}_{\{{2, 3, 4, 7, 8, 10}\}}$, $B^{10}_{\{{2, 3, 5, 7, 8, 10}\}}$, $B^{10}_{\{{2, 3, 6, 7, 8, 10}\}}$, $B^{10}_{\{{2, 3, 7, 8, 9, 10}\}}$, $B^{10}_{\{{2, 5, 6, 7, 9, 10}\}}$, $B^{10}_{\{{3, 4, 5, 6, 7, 10}\}}$, $B^{10}_{\{{3, 4, 5, 7, 8, 9}\}}$, $B^{10}_{\{{3, 4, 5, 7, 9, 10}\}}$, $B^{10}_{\{{3, 5, 6, 8, 9, 10}\}}$, $B^{10}_{\{{4, 6, 7, 8, 9, 10}\}}$ \\
                \hline
                11 & 4 & $A^{11}_{\{{1, 2, 9}\}}$, $A^{11}_{\{{1, 2, 10}\}}$, $A^{11}_{\{{1, 3, 6}\}}$, $A^{11}_{\{{1, 3, 7}\}}$, $A^{11}_{\{{1, 4, 5}\}}$, $A^{11}_{\{{1, 4, 6}\}}$, $A^{11}_{\{{1, 5, 8}\}}$, $A^{11}_{\{{1, 7, 8}\}}$, $A^{11}_{\{{1, 9, 10}\}}$, $A^{11}_{\{{1, 9, 11}\}}$, $A^{11}_{\{{1, 10, 11}\}}$, $A^{11}_{\{{2, 3, 4}\}}$, $A^{11}_{\{{2, 3, 8}\}}$, $A^{11}_{\{{2, 4, 8}\}}$, $A^{11}_{\{{2, 5, 6}\}}$, $A^{11}_{\{{2, 5, 7}\}}$, $A^{11}_{\{{2, 6, 7}\}}$, $A^{11}_{\{{9, 2, 10}\}}$, $A^{11}_{\{{4, 3, 11}\}}$, $A^{11}_{\{{9, 3, 5}\}}$, $A^{11}_{\{{10, 3, 5}\}}$, $A^{11}_{\{{10, 3, 6}\}}$, $A^{11}_{\{{9, 3, 7}\}}$, $A^{11}_{\{{3, 11, 8}\}}$, $A^{11}_{\{{10, 4, 5}\}}$, $A^{11}_{\{{9, 4, 6}\}}$, $A^{11}_{\{{9, 4, 7}\}}$, $A^{11}_{\{{10, 4, 7}\}}$, $A^{11}_{\{{11, 4, 8}\}}$, $A^{11}_{\{{5, 6, 7}\}}$, $A^{11}_{\{{11, 5, 6}\}}$, $A^{11}_{\{{11, 5, 7}\}}$, $A^{11}_{\{{9, 5, 8}\}}$, $A^{11}_{\{{11, 6, 7}\}}$, $A^{11}_{\{{9, 6, 8}\}}$, $A^{11}_{\{{10, 6, 8}\}}$, $A^{11}_{\{{10, 7, 8}\}}$, $A^{11}_{\{{9, 10, 11}\}}$, $B^{11}_{\{{1, 2, 3, 5, 11}\}}$, $B^{11}_{\{{1, 2, 4, 7, 11}\}}$, $B^{11}_{\{{1, 2, 6, 8, 11}\}}$, $B^{11}_{\{{1, 3, 4, 8, 9}\}}$, $B^{11}_{\{{1, 3, 4, 8, 10}\}}$, $B^{11}_{\{{1, 5, 6, 9, 10}\}}$, $B^{11}_{\{{1, 5, 7, 9, 10}\}}$, $B^{11}_{\{{1, 6, 7, 9, 10}\}}$, $B^{11}_{\{{2, 3, 6, 9, 11}\}}$, $B^{11}_{\{{2, 3, 7, 10, 11}\}}$, $B^{11}_{\{{2, 4, 5, 9, 11}\}}$, $B^{11}_{\{{2, 4, 6, 10, 11}\}}$, $B^{11}_{\{{2, 5, 8, 10, 11}\}}$, $B^{11}_{\{{2, 7, 8, 9, 11}\}}$, $B^{11}_{\{{3, 4, 5, 6, 8}\}}$, $B^{11}_{\{{3, 4, 5, 7, 8}\}}$, $B^{11}_{\{{3, 4, 6, 7, 8}\}}$, $B^{11}_{\{{3, 4, 8, 9, 10}\}}$ \\
                \hline
                12 & 6 & $A^{12}_{\{{1, 2, 3, 4, 5}\}}$, $A^{12}_{\{{1, 2, 3, 6, 12}\}}$, $A^{12}_{\{{1, 2, 3, 7, 8}\}}$, $A^{12}_{\{{1, 2, 3, 9, 11}\}}$, $A^{12}_{\{{1, 2, 4, 6, 10}\}}$, $A^{12}_{\{{1, 2, 4, 7, 9}\}}$, $A^{12}_{\{{1, 2, 4, 8, 12}\}}$, $A^{12}_{\{{1, 2, 5, 6, 7}\}}$, $A^{12}_{\{{1, 2, 5, 8, 10}\}}$, $A^{12}_{\{{1, 2, 5, 9, 12}\}}$, $A^{12}_{\{{1, 2, 6, 8, 9}\}}$, $A^{12}_{\{{1, 2, 7, 11, 12}\}}$, $A^{12}_{\{{1, 3, 4, 6, 8}\}}$, $A^{12}_{\{{1, 3, 4, 7, 10}\}}$, $A^{12}_{\{{1, 3, 4, 9, 12}\}}$, $A^{12}_{\{{1, 3, 5, 6, 11}\}}$, $A^{12}_{\{{1, 3, 5, 7, 12}\}}$, $A^{12}_{\{{1, 3, 5, 8, 9}\}}$, $A^{12}_{\{{1, 3, 6, 7, 9}\}}$, $A^{12}_{\{{1, 3, 8, 10, 12}\}}$, $A^{12}_{\{{1, 4, 5, 6, 9}\}}$, $A^{12}_{\{{1, 4, 5, 7, 8}\}}$, $A^{12}_{\{{1, 4, 5, 11, 12}\}}$, $A^{12}_{\{{1, 4, 6, 7, 12}\}}$, $A^{12}_{\{{1, 4, 8, 9, 11}\}}$, $A^{12}_{\{{1, 5, 6, 8, 12}\}}$, $A^{12}_{\{{1, 5, 7, 9, 10}\}}$, $A^{12}_{\{{1, 6, 7, 8, 11}\}}$, $A^{12}_{\{{1, 6, 9, 10, 12}\}}$, $A^{12}_{\{{1, 7, 8, 9, 12}\}}$, $A^{12}_{\{{2, 3, 4, 6, 9}\}}$, $A^{12}_{\{{2, 3, 4, 7, 12}\}}$, $A^{12}_{\{{2, 3, 4, 8, 11}\}}$, $A^{12}_{\{{2, 3, 5, 6, 8}\}}$, $A^{12}_{\{{2, 3, 5, 7, 9}\}}$, $A^{12}_{\{{2, 3, 5, 10, 12}\}}$, $A^{12}_{\{{2, 3, 6, 7, 10}\}}$, $A^{12}_{\{{2, 3, 8, 9, 12}\}}$, $A^{12}_{\{{2, 4, 5, 6, 12}\}}$, $A^{12}_{\{{2, 4, 5, 7, 11}\}}$, $A^{12}_{\{{2, 4, 5, 8, 9}\}}$, $A^{12}_{\{{2, 4, 6, 7, 8}\}}$, $A^{12}_{\{{2, 4, 9, 10, 12}\}}$, $A^{12}_{\{{2, 5, 6, 9, 11}\}}$, $A^{12}_{\{{2, 5, 7, 8, 12}\}}$, $A^{12}_{\{{2, 6, 7, 9, 12}\}}$, $A^{12}_{\{{2, 6, 8, 11, 12}\}}$, $A^{12}_{\{{2, 7, 8, 9, 10}\}}$, $A^{12}_{\{{3, 4, 5, 6, 7}\}}$, $A^{12}_{\{{3, 4, 5, 8, 12}\}}$, $A^{12}_{\{{3, 4, 5, 9, 10}\}}$, $A^{12}_{\{{3, 4, 6, 11, 12}\}}$, $A^{12}_{\{{3, 4, 7, 8, 9}\}}$, $A^{12}_{\{{3, 5, 6, 9, 12}\}}$, $A^{12}_{\{{3, 5, 7, 8, 11}\}}$, $A^{12}_{\{{3, 6, 7, 8, 12}\}}$, $A^{12}_{\{{3, 6, 8, 9, 10}\}}$, $A^{12}_{\{{3, 7, 9, 11, 12}\}}$, $A^{12}_{\{{4, 5, 6, 8, 10}\}}$, $A^{12}_{\{{4, 5, 7, 9, 12}\}}$, $A^{12}_{\{{4, 6, 7, 9, 11}\}}$, $A^{12}_{\{{4, 6, 8, 9, 12}\}}$, $A^{12}_{\{{4, 7, 8, 10, 12}\}}$, $A^{12}_{\{{5, 6, 7, 8, 9}\}}$, $A^{12}_{\{{5, 6, 7, 10, 12}\}}$, $A^{12}_{\{{5, 8, 9, 11, 12}\}}$, $B^{12}_{\{{1, 2, 3, 4, 6, 7, 11}\}}$, $B^{12}_{\{{1, 2, 3, 4, 8, 9, 10}\}}$, $B^{12}_{\{{1, 2, 3, 4, 10, 11, 12}\}}$, $B^{12}_{\{{1, 2, 3, 5, 6, 9, 10}\}}$, $B^{12}_{\{{1, 2, 3, 5, 7, 10, 11}\}}$, $B^{12}_{\{{1, 2, 3, 5, 8, 11, 12}\}}$, $B^{12}_{\{{1, 2, 3, 6, 8, 10, 11}\}}$, $B^{12}_{\{{1, 2, 3, 7, 9, 10, 12}\}}$, $B^{12}_{\{{1, 2, 4, 5, 6, 8, 11}\}}$, $B^{12}_{\{{1, 2, 4, 5, 7, 10, 12}\}}$, $B^{12}_{\{{1, 2, 4, 5, 9, 10, 11}\}}$, $B^{12}_{\{{1, 2, 4, 6, 9, 11, 12}\}}$, $B^{12}_{\{{1, 2, 4, 7, 8, 10, 11}\}}$, $B^{12}_{\{{1, 2, 5, 6, 10, 11, 12}\}}$, $B^{12}_{\{{1, 2, 5, 7, 8, 9, 11}\}}$, $B^{12}_{\{{1, 2, 6, 7, 8, 10, 12}\}}$, $B^{12}_{\{{1, 2, 6, 7, 9, 10, 11}\}}$, $B^{12}_{\{{1, 2, 8, 9, 10, 11, 12}\}}$, $B^{12}_{\{{1, 3, 4, 5, 6, 10, 12}\}}$, $B^{12}_{\{{1, 3, 4, 5, 7, 9, 11}\}}$, $B^{12}_{\{{1, 3, 4, 5, 8, 10, 11}\}}$, $B^{12}_{\{{1, 3, 4, 6, 9, 10, 11}\}}$, $B^{12}_{\{{1, 3, 4, 7, 8, 11, 12}\}}$, $B^{12}_{\{{1, 3, 5, 6, 7, 8, 10}\}}$, $B^{12}_{\{{1, 3, 5, 9, 10, 11, 12}\}}$, $B^{12}_{\{{1, 3, 6, 7, 10, 11, 12}\}}$, $B^{12}_{\{{1, 3, 6, 8, 9, 11, 12}\}}$, $B^{12}_{\{{1, 3, 7, 8, 9, 10, 11}\}}$, $B^{12}_{\{{1, 4, 5, 6, 7, 10, 11}\}}$, $B^{12}_{\{{1, 4, 5, 8, 9, 10, 12}\}}$, $B^{12}_{\{{1, 4, 6, 7, 8, 9, 10}\}}$, $B^{12}_{\{{1, 4, 6, 8, 10, 11, 12}\}}$, $B^{12}_{\{{1, 4, 7, 9, 10, 11, 12}\}}$, $B^{12}_{\{{1, 5, 6, 7, 9, 11, 12}\}}$, $B^{12}_{\{{1, 5, 6, 8, 9, 10, 11}\}}$, $B^{12}_{\{{1, 5, 7, 8, 10, 11, 12}\}}$, $B^{12}_{\{{2, 3, 4, 5, 6, 10, 11}\}}$, $B^{12}_{\{{2, 3, 4, 5, 7, 8, 10}\}}$, $B^{12}_{\{{2, 3, 4, 5, 9, 11, 12}\}}$, $B^{12}_{\{{2, 3, 4, 6, 8, 10, 12}\}}$, $B^{12}_{\{{2, 3, 4, 7, 9, 10, 11}\}}$, $B^{12}_{\{{2, 3, 5, 6, 7, 11, 12}\}}$, $B^{12}_{\{{2, 3, 5, 8, 9, 10, 11}\}}$, $B^{12}_{\{{2, 3, 6, 7, 8, 9, 11}\}}$, $B^{12}_{\{{2, 3, 6, 9, 10, 11, 12}\}}$, $B^{12}_{\{{2, 3, 7, 8, 10, 11, 12}\}}$, $B^{12}_{\{{2, 4, 5, 6, 7, 9, 10}\}}$, $B^{12}_{\{{2, 4, 5, 8, 10, 11, 12}\}}$, $B^{12}_{\{{2, 4, 6, 7, 10, 11, 12}\}}$, $B^{12}_{\{{2, 4, 6, 8, 9, 10, 11}\}}$, $B^{12}_{\{{2, 4, 7, 8, 9, 11, 12}\}}$, $B^{12}_{\{{2, 5, 6, 7, 8, 10, 11}\}}$, $B^{12}_{\{{2, 5, 6, 8, 9, 10, 12}\}}$, $B^{12}_{\{{2, 5, 7, 9, 10, 11, 12}\}}$, $B^{12}_{\{{3, 4, 5, 6, 8, 9, 11}\}}$, $B^{12}_{\{{3, 4, 5, 7, 10, 11, 12}\}}$, $B^{12}_{\{{3, 4, 6, 7, 8, 10, 11}\}}$, $B^{12}_{\{{3, 4, 6, 7, 9, 10, 12}\}}$, $B^{12}_{\{{3, 4, 8, 9, 10, 11, 12}\}}$, $B^{12}_{\{{3, 5, 6, 7, 9, 10, 11}\}}$, $B^{12}_{\{{3, 5, 6, 8, 10, 11, 12}\}}$, $B^{12}_{\{{3, 5, 7, 8, 9, 10, 12}\}}$, $B^{12}_{\{{4, 5, 6, 7, 8, 11, 12}\}}$, $B^{12}_{\{{4, 5, 6, 9, 10, 11, 12}\}}$, $B^{12}_{\{{4, 5, 7, 8, 9, 10, 11}\}}$, $B^{12}_{\{{6, 7, 8, 9, 10, 11, 12}\}}$
         \\
        \end{tblr}
        }
        \caption{Minimal clique covers for small $N$ and $k \geq 4$.}
        \label{tab:smallsolutions}
    \end{table}
\end{document}